\def\R{\mathbb R}    
\def\N{\mathbb N}  
\def\<{\langle} \def\>{\rangle}  
\def\d{\text{\rm{d}}}   
\def\E{\mathbb E}
\def\P{\mathbf P}
\def\beg{\begin} \def\beq{\begin{equation}}  
\def\F{\mathcal F}
\def\b{\mathbf b}
\def\ee{\varepsilon}
\let\oldpm\pm
\def\pm{\pi_{{\bf m}}}
\def\L{\mathcal L}
\def\({{\Big(}}
\def\){{\Big)}}
\def\bt{\begin{theorem}}
\def\et{\end{theorem}}
\def\bl{\begin{lemma}}
\def\el{\end{lemma}}
\def\br{\begin{remark}}
\def\er{\end{remark}}
\def\bx{\begin{Example}}
\def\ex{\end{Example}}
\def\bd{\begin{definition}}
\def\ed{\end{definition}}
\def\bp{\begin{proposition}}
\def\ep{\end{proposition}}
\def\bc{\begin{corollary}}
\def\ec{\end{corollary}}
\newcommand{\be}{\begin{eqnarray}}
\renewcommand{\ee}{\end{eqnarray}}
\newcommand{\ce}{\begin{eqnarray*}}
\newcommand{\de}{\end{eqnarray*}}
\newtheorem{thm}{Theorem}[section]
\newtheorem{cor}[thm]{Corollary}
\newtheorem{lem}[thm]{Lemma}
\newtheorem{prp}[thm]{Proposition}
\newtheorem{exa}[thm]{Example}
\newtheorem{defn}[thm]{Definition}
\newtheorem{rem}[thm]{Remark}
\definecolor{wco}{rgb}{0.5,0.2,0.3}
\numberwithin{equation}{section}
\newcounter{cprop}[section]
\newcommand{\mcB}{\mathcal{B}}
\newtheorem{definition}[cprop]{Definition}
\newtheorem{remark}[cprop]{Remark}
\newtheorem{lemma}[cprop]{Lemma}
\newtheorem{proposition}[cprop]{Proposition}
\newtheorem{theorem}[cprop]{Theorem}
\newtheorem{corollary}[cprop]{Corollary}
\renewcommand{\and}{\quad\textrm{ and }\quad}
\renewcommand{\P}{\mathbb{P}}
\renewcommand{\a}{\alpha}
\renewcommand{\b}{\beta}
\renewcommand{\L}{\Lambda}
\renewcommand{\l}{\lambda}
\renewcommand{\o}{\omega}
\renewcommand{\O}{\Omega}
\renewcommand{\t}{\theta}
\newcommand{\mcA}{\mathcal{A}}
\newcommand{\mcL}{\mathcal{L}}
\newcommand{\mcF}{\mathcal{F}}
\newcommand{\mcP}{\mathcal{P}}
\newcommand{\g}{\gamma}
\newcommand{\s}{\sigma}
\newcommand{\vp}{\varphi}
\newcommand{\ve}{\varepsilon}
\newcommand{\Sum}{\sum\limits}
\title{{\bf Random attractors for a class of stochastic partial
 differential equations  driven by general additive noise}
\footnote{Supported in part by DFG--Internationales Graduiertenkolleg
``Stochastics and Real World Models'', the SFB-701  and  the BiBoS Research Center.
The support of  Issac Newton Institute for Mathematical
Sciences in Cambridge is also gratefully acknowledged where part of this work was done during the special semester on
``Stochastic Partial Differential Equations''.} }
\author{{\bf Benjamin Gess $^a$, Wei Liu $^a$\footnote{Corresponding author: wei.liu@uni-bielefeld.de}
, Michael R\"{o}ckner $^{a,b}$
}\\
{\footnotesize $a.$ Fakult\"at f\"ur Mathematik, Universit\"at Bielefeld,
D-33501 Bielefeld, Germany}\\
 \footnotesize{$b.$ Department of Mathematics and Statistics, Purdue University, West Lafayette, 47906 IN, USA}\\
}
\date{}
\begin{document}
\maketitle

\begin{abstract}
The existence of  random attractors for a large class of stochastic partial differential equations
 (SPDE) driven by general additive noise is established.
The main results are applied to various types of SPDE,  as e.g.  stochastic reaction-diffusion equations,
the stochastic $p$-Laplace equation and  stochastic porous media equations.
Besides classical Brownian motion, we also include space-time fractional Brownian motion
and space-time L\'evy noise as admissible random perturbations.
 Moreover, cases where the attractor consists of a single point are also investigated and bounds for the speed of attraction are obtained.
\end{abstract}
\noindent
 AMS Subject Classification:\ 35B41, 60H15, 37L30, 35B40\\
\noindent
 Keywords: Random attractor; L\'evy noise; fractional Brownian motion; stochastic evolution equations;
 porous media equations; $p$-Laplace equation; reaction-diffusion equations.

\bigbreak

\section{Introduction}

Since the foundational work in \cite{CDF97,CF94,S92} the long time behaviour of several examples of SPDE
 perturbed by additive noise has been extensively investigated by means of proving
the existence of a global random attractor (cf. e.g. \cite{BLW09,B06,CL03,CLR98,D97,F04,LG08,W09,W09b}).
However, these results address only some specific examples of SPDE of semilinear type.
To the best of our knowledge the only result concerning a non-semilinear SPDE, namely stochastic generalized porous media equations
is given in \cite{BGLR10}. In this work we provide a general result yielding the existence of a (unique) random attractor
for a large class of SPDE perturbed by general additive noise.
In particular, the result is applicable also to quasilinear equations like stochastic porous media equations
and the stochastic $p$-Laplace equation. The existence of the random attractor for the stochastic
porous medium equation (SPME) as obtained in \cite{BGLR10} is contained as a special case
(at least if the noise is regular enough, cf. Remark \ref{rmk:PME}). We also would like to point out that we include the well-studied
case of stochastic reaction-diffusion equations, even in the case of high order growth of the nonlinearity by reducing it to
the deterministic case and then applying our general results
(cf. Remark \ref{rmk:RDE} for details and comparison with previous results).
 Apart from allowing a large class of admissible drifts, we also formulate our results for general
additive perturbations, thus containing the case of Brownian motion and fractional Brownian motion (cf. \cite{GKN09,MS04}).
We emphasize, however, that the continuity of the noise in time is not necessary. Our techniques are designed so that
they also apply to c\`{a}dl\`{a}g noise. In particular, L\'{e}vy-type noises are included (cf. Section \ref{sec:applications}).
Under a further condition on the drift, we prove that the random attractor consists of a single point, i.e. the existence of a random fixed point. Hence the existence of a unique stationary solution is also obtained. 

Our results are based on the variational approach to (S)PDE. This  approach has been used
intensively in recent years to analyze SPDE driven by an infinite-dimensional Wiener process.
For general results on the existence and uniqueness of variational solutions to SPDE
 we refer to \cite{G82,KR79,LR10,P75,RRW07,Z09}.  As a typical example of an SPDE in this framework
 stochastic porous media equations have been intensively investigated in \cite{BDR08,BDR08-1,BDR09,BDR09-1,DRRW06,K06,L09,L10,RW08}.

This  paper is organized as follows. In the rest of this section we present the main results 
(Theorems \ref{thm:generation_rds}, \ref{T6.1}  and \ref{thm:single_point}) and recall  some concepts of the theory of random dynamical systems.
  The proofs of the main theorems are given in the next section.
In Section 3  we apply
the main results to various examples of SPDE such as  stochastic reaction-diffusion equations,
the stochastic $p$-Laplace equation and  stochastic porous medium equations with general additive noise.

Now let us  describe our framework  and the main results.  Let
  $$V\subseteq H\equiv H^*\subseteq V^*$$
be a Gelfand triple, $i.e.$  $(H, \<\cdot,\cdot\>_H)$ is a separable Hilbert space and is identified with
its dual space $H^*$ by the Riesz isomorphism $i: H\rightarrow H^*$, $V$ is a reflexive  Banach space such that
 it is continuously and densely embedded into $H$. $_{V^*}\<\cdot,\cdot\>_V$ denotes the dualization
between $V$ and its dual space $V^*$. Let $A: V \to V^*$ be measurable,
 $(\Omega,\mathcal{F},\mathcal{F}_t,\mathbb{P})$ be a filtered probability space and
$(N_t)_{t \in \R}$ be a $V$-valued adapted stochastic process.
 For $[s,t] \subseteq \R$ we consider the following stochastic evolution equation
\begin{align}\label{equation 6.1}
   \d X_r &= A(X_r)\d r+ dN_r, \  r \in [s,t], \\
   X_s    &= x\in H. \nonumber
\end{align}
If $A$ satisfies the standard monotonicity and coercivity conditions (cf. $(H1)-(H4)$ below) we shall prove
the existence and uniqueness of solutions to $(\ref{equation 6.1})$ in the sense of Definition \ref{def:soln}.

  Suppose that there exist  $\alpha>1$ and constants $\delta>0$, $K,C \in \R$ such that the following conditions hold for all $v,v_1,v_2\in V$:
 \begin{enumerate}
 \item [$(H1)$] (Hemicontinuity)
      The map  $ s\mapsto { }_{V^*}\<A(v_1+s v_2),v\>_V$ is  continuous on $\mathbb{R}$.
  \item [$(H2)$] (Monotonicity)
     $$2{  }_{V^*}\<A(v_1)-A(v_2), v_1-v_2\>_V
     \le C\|v_1-v_2\|_H^2. $$
  \item [$(H3)$] (Coercivity)
    $$ 2{ }_{V^*}\<A(v), v\>_V +\delta
    \|v\|_V^{\alpha} \le C + K \|v\|_H^2.$$
  \item[$(H4)$] (Growth)
   $$ \|A(v)\|_{V^*} \le  C(1 + \|v\|_V^{\alpha-1}).$$
  \end{enumerate}

We can now define the notion of a solution to \eqref{equation 6.1}.

\beg{defn}\label{def:soln} An $H$-valued, $(\F_t)$-adapted process $\{X_r\}_{r\in [s,t]}$ is called a  solution of $(\ref{equation 6.1})$ if $X_\cdot(\o) \in L^\alpha([s,t]; V) \cap L^2([s,t]; H)$ and
  $$X_r(\o)=x+\int_s^r A(X_u(\o))\d u + N_r(\o)-N_s(\o) $$
holds for all $r \in [s,t]$ and all $\o \in \O$.
\end{defn}

Since the solution to \eqref{equation 6.1} will be constructed via a transformation of \eqref{equation 6.1}
into a deterministic equation
(parametrized by $\o$) we can allow very general additive stochastic perturbations.
In particular, we do not have to assume the noise to be a martingale or a Markov process.

Since the noise is not required to be Markovian, the solutions to the SPDE cannot be expected to
define a Markov process. Therefore, the approach to study long-time behaviour of solutions to SPDE
via invariant measures and ergodicity of the associated semigroup  is not an option here. In particular, the results from \cite{KS04} cannot be applied to prove that the attractor consists of a single point. Consequently,
our analysis  is instead based on the framework of random dynamical systems (RDS), which more or less
 requires the driving process to have stationary increments (cf. Lemma \ref{lem:gen_metric_dns}).

Let $((\O,\mcF,\P),(\theta_t)_{t \in \R})$ be a metric dynamical system, i.e. $(t,\o) \mapsto \theta_t(\o)$ is $\mcB(\R) \otimes \mcF/\mcF$ measurable, $\theta_0 =$ id, $\theta_{t+s} = \theta_t \circ \theta_s$ and $\theta_t$ is $\P$-preserving, for all $s,t \in \R$.
  \begin{enumerate}
    \item [$(S1)$] (Strictly stationary increments) For all $t,s \in \R$, $\o \in \O$:
                    $$N_t(\o)-N_s(\o) = N_{t-s}(\t_s \o)-N_0(\t_s \o).$$
    \item [$(S2)$] (Regularity) For each $\o \in \O$,
                    $$N_\cdot(\o) \in L^\a_{loc}(\R;V)\cap L^2_{loc}(\R;H)$$
(with the same $\alpha>1$ as in $(H3)$).
    \item [$(S3)$] (Joint measurability) $N: \R \times \O \to V$ is $\mcB(\R) \otimes \mcF/\mcB(V)$ measurable.
  \end{enumerate}

\begin{rem}
  Although we do not explicitly assume $N_t$ to have c\`{a}dl\`{a}g paths, in the applications the underlying metric dynamical system $((\O,\mcF,\P),(\theta_t)_{t \in \R})$ is usually
defined as the space of all c\`{a}dl\`{a}g functions endowed with a topology making the Wiener shift $\t: \R\times\Omega \to \Omega;\ \theta_t(\omega)=\omega(\cdot+t)-\omega(t)$ measurable and the probability measure $\P$ is given by the distribution of the noise $N_t$. Thus, in the applications we will always require $N_t$ to
have c\`{a}dl\`{a}g paths.
\end{rem}

We now recall the notion of a random dynamical system. For more details concerning the theory of random dynamical systems
we refer to \cite{CDF97,CF94}.

\begin{defn}
  Let $(H,d)$ be a complete and separable metric space.
  \begin{enumerate}
  \item[(i)] A random dynamical system (RDS) over $\theta_t$ is a measurable map
    \begin{align*}
     \vp: \R_+ \times H \times \O \to H;\
     (t,x,\o)      \mapsto \vp(t,\o)x
    \end{align*}
    such that $\vp(0,\o) =$ id and
      \[ \vp(t+s,\o) = \vp(t,\theta_s \o) \circ \vp(s,\o), \]
    for all $t,s \in \R_+$ and $\o \in \O$. $\vp$ is said to be a continuous RDS if $x \mapsto \vp(t,\o)x$ is continuous for all $t \in \R_+$ and $\o \in \O$.
  \item[(ii)]
  A stochastic flow is a family of mappings $S(t,s;\o): H \to H$, $-\infty < s \le t < \infty$, parametrized by $\o$ such that
  \begin{align*}
    (t,s,x,\o) \mapsto S(t,s;\o)x
  \end{align*}
  is $ \mcB(\R) \otimes \mcB(\R) \otimes \mcB(H) \otimes \mcF/\mcB(H)$-measurable and
  \begin{align*}
    &S(t,r;\o)S(r,s;\o)x = S(t,s;\o)x, \\
    &S(t,s;\o)x = S(t-s,0;\t_s\o)x,
  \end{align*}
  for all $s \le r \le t$ and all $\o \in \O$. $S$ is said to be a continuous stochastic flow if $x \mapsto S(t,s;\o)x$ is continuous for all $s \le t$ and $\o \in \O$.
  \end{enumerate}
\end{defn}

In order to apply the theory of RDS and in particular to apply Proposition \ref{prop:suff_criterion} below,
we first need to define the RDS  associated with \eqref{equation 6.1}. For this we consider the unique $\o$-wise solution
(denoted by $Z(\cdot, s;\omega)x$) of
\begin{equation}\label{e1}
    Z_t=x-N_s(\o)+\int_s^t A(Z_r+N_r(\omega)) \d r,\ t \ge s,
\end{equation}
and then define
\begin{align}
    S(t,s;\o)x &:= Z(t,s;\o)x + N_t(\o), \label{eqn:S-def}\\
    \vp(t,\o)x &:= S(t,0;\o)x = Z(t,0;\o)x + N_t(\o). \label{eqn:vp-def}
\end{align}
Note that $S(\cdot,s;\omega)$ satisfies
  $$ S(t,s;\o)x = x + \int_s^t A(S(r,s;\o)x) \d r + N_t(\o) - N_s(\o),$$
for each fixed $\o \in \O$ and all $t \ge s$. Hence $S(t,s;\o)x$ solves \eqref{equation 6.1} in the sense of Definition \ref{def:soln}.

\begin{thm}\label{thm:generation_rds}
  Under the assumptions $(H1)$-$(H4)$ and $(S1)$-$(S3)$, $S(t,s;\o)$ defined in \eqref{eqn:S-def} is a continuous stochastic flow and $\vp$ defined in \eqref{eqn:vp-def} is a continuous random dynamical system.
\end{thm}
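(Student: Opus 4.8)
The plan is to establish Theorem~\ref{thm:generation_rds} in three stages: first, solve the $\o$-wise equation \eqref{e1} and derive the relevant estimates; second, deduce from these the flow/cocycle identities and joint measurability; and third, promote continuity in the initial datum from an a.e.\ statement to a genuine pointwise-in-$\o$ statement.

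\medskip\noindent\textbf{Step 1: Well-posedness of the random equation.} For fixed $\o\in\O$, set $B_\o(r,v):=A(v+N_r(\o))$, acting from $V$ to $V^*$. Using $(S2)$ (so that $N_\cdot(\o)\in L^\a_{loc}(\R;V)\cap L^2_{loc}(\R;H)$) together with $(H1)$--$(H4)$, one checks that $B_\o$ satisfies the classical hemicontinuity, monotonicity, coercivity and growth conditions of the variational framework, with time-dependent but locally integrable constants: monotonicity is inherited verbatim from $(H2)$ since the shift by $N_r(\o)$ cancels in the difference; coercivity follows from $(H3)$ after a Young-inequality absorption of the cross terms $\langle A(v+N_r),N_r\rangle$ using $(H4)$ and $\|N_r(\o)\|_V^\a\in L^1_{loc}$; growth is immediate from $(H4)$. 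Hence by the standard existence and uniqueness theorem for deterministic monotone evolution equations (e.g.\ Krylov--Rozovskii / Lions, as cited), for each $\o$ and each $x\in H$ there is a unique $Z(\cdot,s;\o)x\in L^\a_{loc}([s,\infty);V)\cap C([s,\infty);H)$ solving \eqref{e1}; defining $S$ and $\vp$ by \eqref{eqn:S-def}--\eqref{eqn:vp-def} then gives a process solving \eqref{equation 6.1} in the sense of Definition~\ref{def:soln}, as already noted in the excerpt.

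\medskip\noindent\textbf{Step 2: Cocycle identity and measurability.} The evolution property $S(t,r;\o)S(r,s;\o)x=S(t,s;\o)x$ is a consequence of uniqueness: both sides solve \eqref{equation 6.1} on $[r,t]$ with the same initial value at time $r$. The shift-cocycle identity $S(t,s;\o)x=S(t-s,0;\t_s\o)x$ follows by combining uniqueness with the strict stationarity $(S1)$: substituting $N_r(\o)=N_{r-s}(\t_s\o)+N_s(\o)-N_0(\t_s\o)$ into \eqref{e1} and changing the time variable shows $Z(t,s;\o)x+N_s(\o)-N_0(\t_s\o)$ solves the $\t_s\o$-equation started at time $0$, which by uniqueness equals $Z(t-s,0;\t_s\o)x$; adding $N_t(\o)=N_{t-s}(\t_s\o)+N_s(\o)-N_0(\t_s\o)$ and simplifying gives the claim, and the RDS cocycle identity for $\vp$ is the special case $s=0$ after one more application. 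For joint measurability of $(t,s,x,\o)\mapsto S(t,s;\o)x$ one approximates the solution by a Galerkin scheme or a time-discretization: each finite-dimensional/implicit-Euler approximation is jointly measurable in $(t,s,x,\o)$ (here $(S3)$ enters to guarantee measurability of $(r,\o)\mapsto N_r(\o)$), and the approximations converge to $S$, so the limit is measurable; $\vp(0,\o)=\mathrm{id}$ is built into the construction.

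\medskip\noindent\textbf{Step 3: Continuity in $x$ for every $\o$ --- the main obstacle.} Monotonicity $(H2)$ gives, for two solutions with initial data $x,y$, the pathwise estimate
\[
\|S(t,s;\o)x-S(t,s;\o)y\|_H^2\le \e^{C(t-s)}\|x-y\|_H^2,
\]
by differentiating $\|Z(t,s;\o)x-Z(t,s;\o)y\|_H^2$ in $t$, using that the $N_r(\o)$-shift cancels in the monotonicity pairing, and applying Gronwall; this holds for each fixed $\o$ with no exceptional set, since the argument is entirely deterministic once $\o$ is frozen. This immediately yields that $x\mapsto S(t,s;\o)x$ is Lipschitz, hence continuous, for all $s\le t$ and all $\o\in\O$, and likewise for $\vp$. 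I expect this step to be the crux: one must be careful that the energy identity for $\|Z_t\|_H^2$ is rigorously justified in the variational setting (it requires $Z\in L^\a_{loc}(V)$ with $\partial_t Z\in L^{\a/(\a-1)}_{loc}(V^*)$, which the growth bound $(H4)$ supplies, together with the Lions--Magenes-type integration-by-parts formula), and that all constants are genuinely $\o$-independent so the conclusion is pointwise in $\o$ rather than almost sure. Assembling Steps~1--3 gives that $S$ is a continuous stochastic flow and $\vp$ a continuous RDS over $\t_t$, completing the proof.
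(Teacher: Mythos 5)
Your proposal follows essentially the same route as the paper: transform \eqref{equation 6.1} into the $\o$-wise deterministic equation \eqref{e1}, verify the hypotheses of the variational existence theorem for $\tilde A_\o(t,v)=A(v+N_t(\o))$ (with the same Young-inequality treatment of the cross term in the coercivity check), obtain the flow and shift-cocycle identities from pathwise uniqueness together with $(S1)$, and get continuity in $x$ from $(H2)$ via Gronwall. All of that is correct, and your shift computation for $S(t,s;\o)x=S(t-s,0;\t_s\o)x$ is exactly right.

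The one genuine gap is in your measurability argument. You write that the Galerkin (or Euler) approximations are measurable in $\o$ and ``converge to $S$, so the limit is measurable.'' But the Galerkin approximants $Z^n(\cdot,0;\o)x$ converge to $Z(\cdot,0;\o)x$ only \emph{weakly} in $L^\a([s_0,t_0];V)$, not pointwise in $t$ in $H$, so for a fixed time $t$ the map $\o\mapsto Z(t,0;\o)x$ is \emph{not} exhibited as an $\o$-wise limit of measurable maps by this statement alone; measurability of a weak limit at a fixed time does not follow formally. The paper closes this hole by a mollification argument: for a Dirac sequence $\vp_k$, the quantities $(\vp_k\ast\<Z^n(\cdot,0;\o)x,h\>_H)(t)$ converge as $n\to\infty$ (weak convergence tested against a fixed $L^{\a/(\a-1)}$ function), giving measurability of $\o\mapsto(\vp_k\ast\<Z(\cdot,0;\o)x,h\>_H)(t)$; then the $H$-continuity of $r\mapsto Z(r,0;\o)x$ lets $k\to\infty$ recover $\<Z(t,0;\o)x,h\>_H$, and since $\mcB(H)$ is generated by the maps $\<h,\cdot\>_H$ this yields measurability of $\o\mapsto Z(t,0;\o)x$. (One also needs the observation that the \emph{whole} Galerkin sequence converges weakly, via the subsequence principle and uniqueness of the limit.) Your argument needs this extra step, or some equivalent device, to be complete; the rest stands.
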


For the proof of Theorem \ref{thm:generation_rds}  as well as the other theorems in this section we refer to the next section.

With the notion of an RDS above we can now recall the stochastic generalization of notions of absorption,
attraction and $\O$-limit sets (cf. \cite{CDF97,CF94}).

\begin{defn}\label{def:rds_basics}
  \begin{enumerate}
  \item[(i)] A (closed) set-valued map $K: \O \to 2^H$ is called measurable if $\omega\mapsto K(\omega)$ takes values in
the closed subsets of $H$ and  for all $x \in H$ the map $\o \mapsto d(x,K(\o))$ is measurable, where for nonempty sets $A,B \in 2^H$ we set
$$d(A,B)=\sup\limits_{x \in A} \inf\limits_{y \in B} d(x,y);   \    \   d(x,B) = d(\{x\},B).$$
 A measurable (closed) set-valued map is also called a (closed) random set.
  \item[(ii)] Let $A$, $B$ be random sets. $A$ is said to absorb $B$ if $\P$-a.s. there exists an absorption time $t_B(\o)$ such that for all $t \ge t_B(\o)$
        \[ \vp(t,\theta_{-t}\o)B(\theta_{-t}\o) \subseteq A(\o).\]
        $A$ is said to attract $B$ if
        \[ d(\vp(t,\theta_{-t}\o)B(\theta_{-t}\o),A(\o)) \xrightarrow[t \to \infty]{} 0,\ \P\text{-a.s. }.\]
  \item[(iii)] For a random set $A$ we define the $\O$-limit set to be
        \[ \O_A(\o)=\O(A,\o) = \bigcap_{T \ge 0} \overline{ \bigcup_{t \ge T} \vp(t,\theta_{-t}\o)A(\theta_{-t}\o)}. \]
  \end{enumerate}
\end{defn}

\begin{defn}
  A random attractor for an RDS $\vp$ is a compact random set $A$ satisfying $\P$-a.s.:
  \begin{enumerate}
   \item[(i)] $A$ is invariant, i.e. $\vp(t,\o)A(\o)= A(\theta_t \o)$ for all $t > 0$.
   \item[(ii)] $A$ attracts all deterministic bounded sets $B \subseteq H$.
  \end{enumerate}
\end{defn}

Note that by \cite{C99} the random attractor for an RDS is uniquely determined.

The following proposition yields a sufficient criterion for the existence of a random attractor of an RDS.
\begin{prp}(cf. \cite[Theorem 3.11]{CF94})\label{prop:suff_criterion}
  Let $\vp$ be an RDS and assume the existence of a compact random set $K$ absorbing every deterministic bounded set $B \subseteq H$.
Then there exists a random attractor $A$, given by
  \[ A(\o) = \overline{ \bigcup_{B \subseteq H,\ B \text{ bounded}} \O_B(\o)}.\]
\end{prp}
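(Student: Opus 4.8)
The plan is to reproduce, in the present setting, the classical argument of Crauel and Flandoli \cite[Theorem 3.11]{CF94}, so I only sketch the main steps. Put $A(\o):=\overline{\bigcup_{B}\O_B(\o)}$, the union ranging over all bounded deterministic $B\subseteq H$; we must check that $A$ is a nonempty compact random set, that it attracts every such $B$, and that it is strictly invariant. First, fix a bounded $B$. Since $K$ absorbs $B$ there is $t_B(\o)$ with $\vp(t,\theta_{-t}\o)B\subseteq K(\o)$ for all $t\ge t_B(\o)$, and as $K(\o)$ is closed this gives $\overline{\bigcup_{t\ge T}\vp(t,\theta_{-t}\o)B}\subseteq K(\o)$ for $T\ge t_B(\o)$. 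These sets are nonempty, closed subsets of the compact set $K(\o)$, hence compact, and they decrease in $T$; therefore $\O_B(\o)$ is a nonempty subset of $K(\o)$, and consequently $A(\o)\subseteq K(\o)$ is closed, hence nonempty and compact.

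For the attraction property, suppose it fails for some bounded deterministic $B$: then there are $\varepsilon>0$, $t_n\uparrow\infty$ and $x_n\in B$ with $d(\vp(t_n,\theta_{-t_n}\o)x_n,A(\o))\ge\varepsilon$. For $n$ large the points $\vp(t_n,\theta_{-t_n}\o)x_n$ lie in the compact set $K(\o)$, so a subsequence converges to some $y\in K(\o)$; since $x_n\in B$ and $t_n\to\infty$, this $y$ belongs to $\overline{\bigcup_{t\ge T}\vp(t,\theta_{-t}\o)B}$ for every $T$, i.e. $y\in\O_B(\o)\subseteq A(\o)$, contradicting $d(y,A(\o))\ge\varepsilon$.

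The invariance $\vp(t,\o)A(\o)=A(\theta_t\o)$ for $t>0$ is obtained from the identity $\vp(t,\o)\O_B(\o)=\O_B(\theta_t\o)$. To see the latter, use the cocycle law in the form $\vp(t+r,\theta_{-r}\o)=\vp(t,\o)\circ\vp(r,\theta_{-r}\o)$ together with $\theta_{-r}\o=\theta_{-(t+r)}(\theta_t\o)$: if $\vp(r_n,\theta_{-r_n}\o)b_n\to z\in\O_B(\o)$ with $r_n\to\infty$ and $b_n\in B$, then continuity of $x\mapsto\vp(t,\o)x$ gives $\vp(t+r_n,\theta_{-(t+r_n)}(\theta_t\o))b_n\to\vp(t,\o)z$, so $\vp(t,\o)z\in\O_B(\theta_t\o)$; conversely, any point of $\O_B(\theta_t\o)$ is a limit along some $r_n\to\infty$, $b_n\in B$ of $\vp(t,\o)\vp(r_n,\theta_{-r_n}\o)b_n$, and extracting as above a convergent subsequence $\vp(r_{n_k},\theta_{-r_{n_k}}\o)b_{n_k}\to w\in\O_B(\o)$ and using continuity identifies the limit as $\vp(t,\o)w$. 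Because $\vp(t,\o)$ is continuous on the compact set $K(\o)\supseteq A(\o)$ it maps closures of subsets of $K(\o)$ onto closures of their images, hence $\vp(t,\o)A(\o)=\overline{\bigcup_B\vp(t,\o)\O_B(\o)}=\overline{\bigcup_B\O_B(\theta_t\o)}=A(\theta_t\o)$.

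Finally, separability of $H$ lets one restrict the union defining $A(\o)$ to countably many closed balls, and measurability of $\o\mapsto\O_B(\o)$ as a closed random set follows from the joint measurability of $(t,\o)\mapsto\vp(t,\theta_{-t}\o)x$ by the measurable-projection arguments in \cite{CF94}; uniqueness of the attractor is \cite{C99}, so $A$ is the random attractor. The step requiring the most care is the invariance: one must keep the action of $(\theta_t)$ and the cocycle identity aligned and pass limits through the continuous maps $\vp(t,\o)$ only after confining everything to the compact absorbing set; the measurability assertion, though routine, is the other technically delicate point.
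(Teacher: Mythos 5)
Your argument is correct and is precisely the classical Crauel--Flandoli proof that the paper invokes by citation only (the paper gives no proof of Proposition \ref{prop:suff_criterion}, referring instead to \cite[Theorem 3.11]{CF94}). All the steps — nonemptiness and compactness of the $\O$-limit sets inside the compact absorbing set, attraction by contradiction via compactness, strict invariance through $\vp(t,\o)\O_B(\o)=\O_B(\theta_t\o)$, and the reduction to countably many balls for measurability — match that reference, so nothing further is needed.
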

\begin{rem}
 In fact, it is known that  the existence of a random attractor is equivalent to  the existence of a compact  attracting  random set
(see \cite{CDS} for more equivalent conditions).
\end{rem}

We aim to apply Proposition \ref{prop:suff_criterion} to prove the existence
of a random attractor for the RDS associated with (\ref{equation 6.1}).
Thus, we need to prove the existence of a compact globally absorbing random set $K$. To show the existence of such a set for (\ref{equation 6.1}), we require some additional assumptions to derive an a priori estimate of the solution in a norm $\|\cdot\|_S$, which is stronger than the norm $\|\cdot\|_H$.

\begin{enumerate}
  \item [$(H5)$] Suppose  there is a subspace $(S, \|\cdot\|_S)$ of $H$ such that the embedding $V\subseteq S$ is continuous and $S\subseteq H$ is compact. Let $T_n$ be positive definite self-adjoint operators on $H$ such that
  $$\<x , y\>_n:=\<x , T_n y\>_H,\ x,y\in H, n\ge 1,$$
  define a sequence of new inner products on $H$. Suppose that the induced norms $\|\cdot\|_n$ are all equivalent to $\|\cdot\|_H$ and for all $x\in S$ we have
    $$\|x\|_n\uparrow\|x\|_S\ \text{as}\  n\rightarrow\infty.$$
  Moreover, we assume that $T_n:V\rightarrow V,\ n\ge 1,$ are continuous and that there exists a constant $C>0$ such that
    \begin{equation}\label{condition 1}
      2{ }_{V^*}\langle A(v),  T_nv\rangle_V\le C(\|v\|_n^2+1), \ v\in V,
    \end{equation}
    and
    \begin{equation}\label{noise condition}
       \sup_{n \in \N} \int_{-1}^0 \|T_n N_t\|_V^\a dt \le C.
    \end{equation}
  \end{enumerate}

\begin{rem} (1)
Assumption $(H5)$ looks quite abstract at first glance.  But it is applicable to a large class of SPDE within the
variational framework,  as e.g. stochastic reaction-diffusion equations, stochastic porous media equations and the
 stochastic $p$-Laplace equation (see  Section 3 for  more examples).

(2) Under  assumption (\ref{condition 1}) the following regularity property of  solutions to general SPDE driven by a Wiener process was established in \cite{L10-2}:
  $$ \mathbb{E}\sup_{s\in[0,t]}\|X_s\|_S^2< \infty,\ \text{for all}\ t>0. $$
\end{rem}

In order to prove the existence of a random attractor, we  need to assume some growth condition on the paths of the noise.

  \begin{enumerate}
    \item [$(S4)$] (Subexponential growth) For $\P$-a.a. $\o \in \O$ and $|t| \to \infty$, $N_t(\o)$ is of subexponential growth, i.e. $\|N_t(\o)\|_V = o(e^{\lambda |t|})$ for every $\lambda >0$.
  \end{enumerate}

\begin{thm}\label{T6.1}
  Suppose $(H1)$-$(H5)$ hold for $\alpha=2,K=0$ or for $\alpha>2$, and that $(S1)$-$(S4)$ are satisfied. Then the RDS $\varphi$ associated with SPDE $(\ref{equation 6.1})$ has a compact random attractor.
\end{thm}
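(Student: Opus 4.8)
The plan is to verify the hypothesis of Proposition \ref{prop:suff_criterion}, i.e., to construct a compact random set $K(\o)$ that absorbs every deterministic bounded set $B\subseteq H$. By Theorem \ref{thm:generation_rds} we already know $\vp$ is a continuous RDS, so the only task is the a priori estimate. The natural candidate for $K(\o)$ is a closed ball in the stronger norm $\|\cdot\|_S$ of a (random) radius $\rho(\o)$, which is compact in $H$ by the compact embedding $S\subseteq H$ in $(H5)$; measurability of this set-valued map follows since $\rho$ will be built out of the (measurable) noise paths via explicit integrals.

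The core computation proceeds in two stages. First, I would obtain a crude absorbing set in the $\|\cdot\|_H$-norm. Working with $Z(\cdot,s;\o)$ solving \eqref{e1}, apply $(H3)$ to $\frac{d}{dr}\|Z_r\|_H^2 = 2{}_{V^*}\<A(Z_r+N_r),Z_r\>_V$; writing $Z_r = (Z_r+N_r) - N_r$, the monotonicity-type/coercivity bound together with $(H4)$ and Young's inequality gives a differential inequality of the form $\frac{d}{dr}\|Z_r\|_H^2 \le -c\|Z_r\|_V^\alpha + (\text{noise terms}) + (K\text{-term in }\|Z_r\|_H^2)$. In the case $\alpha=2,K=0$ the $\|Z_r\|_V^2\ge c'\|Z_r\|_H^2$ term yields exponential decay directly; in the case $\alpha>2$ one uses that $\|Z_r\|_V^\alpha$ dominates $\|Z_r\|_H^2$ for large values (again absorbing a constant via Young), which even without a sign condition on $K$ forces the solution into a bounded region. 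In both cases, using $(S4)$ (subexponential growth of $\|N_t(\o)\|_V$) to control the pulled-back noise integrals $\int_{-t}^0 e^{c r}(1+\|N_r(\o)\|_V^\alpha)\,dr < \infty$, one concludes that there is a random radius $r_0(\o)$ and an absorption time after which $\vp(t,\theta_{-t}\o)B(\theta_{-t}\o)$ lies in the $H$-ball of radius $r_0(\o)$.

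Second, upgrade this to compactness. Here I would exploit $(H5)$: apply $\frac{d}{dr}\|Z_r+N_r\|_n^2$ (or estimate $\|S(r,s;\o)x\|_n^2$) and use \eqref{condition 1}, namely $2{}_{V^*}\<A(v),T_n v\>_V \le C(\|v\|_n^2+1)$, together with the noise control \eqref{noise condition} $\sup_n\int_{-1}^0\|T_n N_t\|_V^\alpha\,dt\le C$, to bound $\|S(r,s;\o)x\|_n^2$ on a unit time interval by a constant depending only on the $\|\cdot\|_H$-size of the starting point at the previous integer time (already controlled by stage one) and on the noise — uniformly in $n$. A standard argument — integrate the differential inequality for $\|\cdot\|_n^2$ over $[-1,0]$, pick a time where the $\|\cdot\|_n^2$-value is comparable to its average, then flow forward one unit step — then produces, after letting $n\to\infty$ and invoking $\|x\|_n\uparrow\|x\|_S$ by monotone convergence, a uniform bound $\|S(0,-t;\o)x\|_S \le \rho(\o)$ for $t$ large and $x$ in any fixed bounded set. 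Setting $K(\o) = \{y\in S: \|y\|_S\le\rho(\o)\}$, which is $H$-compact, finishes the verification and Proposition \ref{prop:suff_criterion} delivers the attractor.

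\textbf{Main obstacle.} The delicate point is the passage to the $\|\cdot\|_S$-bound: one must get estimates on $\|S(r,s;\o)x\|_n$ that are genuinely uniform in $n$ (so that the limit $n\to\infty$ is legitimate), which forces the argument to rely only on \eqref{condition 1} and \eqref{noise condition} and not on any $n$-dependent constants, and one must handle the $\alpha>2$ versus $\alpha=2,K=0$ dichotomy carefully since the dissipation mechanism is different in the two regimes. Coupling this with the two-step "first $H$-absorbing, then $S$-absorbing over one further time unit" scheme — and checking the pull-back/cocycle bookkeeping $S(t,s;\o)=S(t-s,0;\theta_s\o)$ so the random radius depends on $\o$ in the right (tempered/measurable) way — is where the real work lies; the remaining estimates are routine applications of $(H3)$, $(H4)$, Young's inequality and Gronwall.
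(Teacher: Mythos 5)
Your proposal follows essentially the same route as the paper: pull-back absorption in $H$ at time $t=-1$ via the coercivity estimate for the transformed equation together with $(S4)$, followed by an upgrade to an $S$-ball at time $t=0$ by running a Gronwall inequality for $\|Z_t\|_n^2$ using \eqref{condition 1}, \eqref{noise condition} and the intermediate bound on $\int_{-1}^0\|Z_r\|_V^\alpha\,\d r$ coming from the first stage, averaging the starting time over $[-1,0]$, and letting $n\to\infty$ by monotone convergence of $\|\cdot\|_n\uparrow\|\cdot\|_S$. The only caveat is that the differential inequality must be written for $\|Z_r\|_n^2$ (splitting off the term $\ _{V^*}\<A(Z_r+N_r),T_nN_r\>_V$ and controlling it by $(H4)$, Young's inequality and \eqref{noise condition}), not for $\|Z_r+N_r\|_n^2=\|S_r\|_n^2$, which need not be differentiable in $r$; this is a cosmetic adjustment that does not change your argument.
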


\begin{rem}
 $(H1)$-$(H4)$ are the classical monotonicity and coercivity conditions
for the existence and uniqueness of solutions to (\ref{equation 6.1}). It can be replaced by some much weaker
assumptions (e.g. local monotonicity) according to some recent results in \cite{LR10,L11}.
 The existence of random attractors for SPDE with locally monotone coefficients (cf. \cite{LR10,L11})
will be the subject for
 future investigation.
 \end{rem}

In order to make the proof easier to follow, we first give a quick outline. By Proposition \ref{prop:suff_criterion} we only need to prove the existence of a compact globally absorbing random set $K$. This set will be chosen as
$$K(\o) := \overline{ B_S (0, r (\o ) )}^H, $$
where $B_S (0, r)$ denotes the ball with center $0$ and radius $r$ (depending on $\o$) in $S$. Since $S \subseteq H$ is a compact embedding, $K$ is a compact random set in $H$. Note that
   $$\vp(t,\theta_{-t}\o) = S(t,0;\theta_{-t}\o) = S(0,-t;\o).$$
Hence we need  pathwise bounds on $S_0(=S(0,-t;\o))$ in the $S$-norm. In order to get such estimates we consider the norms $\|\cdot\|_n$ on $H$ for which we can apply It\^o's formula.

Under the following stronger monotonicity condition we prove that the random attractor consists of a single point:
  \begin{enumerate}
  \item [$(H2^\prime)$] There exist constants $\beta \ge 2$ and $\l > 0$ such that
    $$2{  }_{V^*}\<A(v_1)-A(v_2), v_1-v_2\>_V \le - \l \|v_1-v_2\|_H^{\beta},\ \forall v_1,v_2 \in V. $$
\end{enumerate}

\begin{thm}\label{thm:single_point}
  Suppose that $(H1)$,$(H2')$,$(H3)$,$(H4)$ and $(S1)$-$(S3)$ hold. If $\b = 2$ also suppose $(S4)$ holds.
 Then the RDS $\varphi$ associated with  SPDE $(\ref{equation 6.1})$ has a compact random attractor $\mcA(\o)$ consisting of a single point:
    \[  \mcA(\o) = \{\eta_0(\o)\}. \]
  In particular, there is a unique random fixed point $\eta_0(\o)$ and a unique invariant random measure $\mu_\cdot \in \mcP_\O(H)$ which is given by
    \[ \mu_\o = \delta_{\eta_0(\o)}, \hskip10pt \P \text{-a.s. .}\]
  Moreover,
  \begin{enumerate}
  \item[(i)] if $\b > 2$, then the speed of convergence is polynomial, more precisely,
           $$ \|S(t,s;\o)x-\eta_0(\theta_t\o)\|_H^2 \le \left\{\frac{\l}{2} (\b-2) (t-s)\right\}^{-\frac{2}{\b-2}},
\ \forall x\in H.$$

  \item[(ii)]  if $\b = 2$, then the speed of convergence is exponential. More precisely, for every $\eta \in (0,\l)$ there is a random variable $K_\eta$ such that
           $$ \|S(t,s;\o)x-\eta_0(\theta_t\o)\|_H^2 \le 2 \left( K_\eta(\o) + \|x\|_H^2 \right) e^{(\l - \eta) s} e^{-\l t}, \
\forall x\in H.$$
  \end{enumerate}
\end{thm}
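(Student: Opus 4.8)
The plan is to reduce everything to a deterministic pathwise estimate on the transformed equation \eqref{e1} and then invoke the already-established fact (Theorem \ref{thm:generation_rds}) that $S(t,s;\o)$ is a continuous stochastic flow. First I would fix $\o$ and, for two initial conditions $x,y\in H$, set $u_r := S(r,s;\o)x - S(r,s;\o)y = Z(r,s;\o)x - Z(r,s;\o)y$; note that the additive noise cancels in the difference, so $u$ solves $\d u_r = (A(S(r,s;\o)x)-A(S(r,s;\o)y))\d r$ with $u_s = x-y$, and $u_r$ lives in $L^\alpha_{loc}(V)\cap L^2_{loc}(H)$. Applying the chain rule for the $H$-norm along the variational solution together with $(H2')$ gives the differential inequality $\frac{\d}{\d r}\|u_r\|_H^2 \le -\l\|u_r\|_H^\beta$. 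This is the engine of the whole theorem.

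Next I would integrate this ODE inequality. For $\beta>2$ one gets, irrespective of the initial value, $\|u_t\|_H^2 \le \left(\frac{\l}{2}(\beta-2)(t-s)\right)^{-2/(\beta-2)}$ — the key point being that the bound does not depend on $\|x-y\|_H$, so all trajectories are squeezed together at a polynomial rate. For $\beta=2$ one gets the exponential estimate $\|u_t\|_H^2 \le \|x-y\|_H^2 e^{-\l(t-s)}$. This already shows the flow is a uniform (pathwise) contraction as $t-s\to\infty$. The remaining work is to produce the actual fixed point $\eta_0(\o)$ and identify it as the attractor. I would do this by a Cauchy argument: consider $\eta_0(\o):=\lim_{s\to-\infty} S(0,s;\o)x$ for arbitrary fixed $x$ (say $x=0$); using the flow property $S(0,s';\o)x = S(0,s;\o)(S(s,s';\o)x)$ for $s'<s<0$ and the contraction estimate above applied on $[s,0]$ with the two initial points $x$ and $S(s,s';\o)x$, the family $\{S(0,s;\o)x\}_{s<0}$ is Cauchy in $H$ (for $\beta>2$ directly; for $\beta=2$ one additionally needs $(S4)$ to control the a priori growth of $\|S(s,s';\o)x\|_H$ so that $\|S(s,s';\o)x\|_H^2 e^{\l s}\to 0$ — this is exactly where subexponential growth of the noise is used). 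The limit $\eta_0(\o)$ is independent of $x$ by the same contraction, is measurable in $\o$ as a pointwise limit of measurable maps, and satisfies $S(t,s;\o)\eta_0(\theta_s\o)=\eta_0(\theta_t\o)$, i.e. $\vp(t,\o)\eta_0(\o)=\eta_0(\theta_t\o)$, by continuity of the flow and the cocycle identity $S(t-s,0;\theta_s\o)=S(t,s;\o)$.

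Then I would check that $\mcA(\o)=\{\eta_0(\o)\}$ is the random attractor: it is trivially a compact (singleton) random set, it is invariant by the previous paragraph, and it attracts every deterministic bounded $B\subseteq H$ because for $x\in B$, $d(\vp(t,\theta_{-t}\o)x,\eta_0(\o)) = \|S(0,-t;\o)x - \eta_0(\o)\|_H$, and writing $\eta_0(\o) = \lim_{s\to-\infty}S(0,s;\o)x'$ and using the contraction on $[-t,0]$ one bounds this by the right-hand side of (i) or (ii), which tends to $0$ uniformly over $B$; so in fact $K(\o):=\{\eta_0(\o)\}$ is itself a compact absorbing (indeed attracting) set and Proposition \ref{prop:suff_criterion} (or uniqueness of attractors, \cite{C99}) identifies it as $\mcA$. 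The bounds (i) and (ii) follow by re-reading the displayed estimates with $y$ replaced by the stationary trajectory $\eta_0(\theta_\cdot\o)$: since $\|S(t,s;\o)x - \eta_0(\theta_t\o)\|_H^2 = \|u_t\|_H^2$ with $u$ the difference of $S(\cdot,s;\o)x$ and the stationary solution through $\eta_0(\theta_s\o)$, inequality (i) is immediate, and for (ii) one absorbs the a priori bound on $\|x\|_H$ and the noise contribution into the random constant $K_\eta(\o)$, the $e^{(\l-\eta)s}$ factor coming from splitting $e^{-\l(t-s)}=e^{-\l t}e^{\l s}$ and using $(S4)$ to trade a bit of the exponent for integrability. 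Finally, uniqueness of the invariant random measure and the identity $\mu_\o=\delta_{\eta_0(\o)}$ follow because any invariant $\mu_\cdot$ must be supported on the attractor, which is the single point $\eta_0(\o)$.

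The main obstacle I anticipate is the $\beta=2$ case: unlike $\beta>2$, the contraction estimate there retains dependence on the initial data through the factor $\|x-y\|_H^2$, so to run the Cauchy-in-$s$ argument one must first establish an a priori bound on $\|S(s,s';\o)x\|_H$ that grows slowly enough (subexponentially, at rate beaten by $e^{\l s/2}$) as $s'\to-\infty$ — and this requires feeding the subexponential growth of the noise $(S4)$ through the coercivity estimate $(H3)$ (with $K=0$ effectively available since $(H2')$ with $\beta=2$ forces a spectral gap, or more directly via a Gronwall argument on $\frac{\d}{\d r}\|Z_r\|_H^2$ using $(H3)$ and Young's inequality applied to $_{V^*}\langle A(Z_r+N_r),N_r\rangle_V$-type cross terms). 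Getting the quantitative form with the explicit $e^{(\l-\eta)s}e^{-\l t}$ and a genuinely $\o$-measurable $K_\eta(\o)$ is the fussy part; everything else is a fairly standard dissipativity-plus-cocycle argument. A secondary technical point is justifying the chain rule $\frac{\d}{\d r}\|u_r\|_H^2 = 2\,{}_{V^*}\langle A(S(r,s;\o)x)-A(S(r,s;\o)y),u_r\rangle_V$ for the difference of two variational solutions — this is the standard Lions–Magenes / Itô-in-the-deterministic-sense lemma under $(H1)$–$(H4)$ and is presumably already available from the solution theory cited for Theorem \ref{thm:generation_rds}.
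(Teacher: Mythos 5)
Your proposal follows essentially the same route as the paper: the pathwise difference estimate from $(H2')$ (noise cancels in the difference), the ODE comparison giving the polynomial ($\b>2$) resp.\ exponential ($\b=2$) contraction, the Cauchy-in-$s$ construction of $\eta_0$ with the $(S4)$-based a priori bound on $\|S(s_2,s_1;\o)x\|_H$ (obtained, as you anticipate, from the coercivity $2\,{}_{V^*}\langle A(v),v\rangle_V\le-\eta\|v\|_H^2+C_\eta$ that $(H2')$, $(H3)$, $(H4)$ imply) needed only when $\b=2$, and the standard invariance/attraction/invariant-measure checks. The one step you gloss over is that for $\b>2$ one cannot literally ``integrate the ODE inequality,'' since $\|u_t\|_H^2$ is only known to satisfy an integral inequality and need not be differentiable; the paper handles this by comparing with the explicit solution $h_\epsilon$ of $h'=-\l h^{\b/2}$ started from $(\|u_{s_2}\|_H+\epsilon)^2$ and a Gronwall/contradiction argument on the first crossing time — a routine but necessary fix.
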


\begin{rem}
  (1) In case $\b > 2$ we recover the optimal rate of convergence found in the deterministic case in \cite{AP81}
for the porous media equation.

  (2) Note that $(H5)$ and for $\b > 2$ the growth condition for the noise $(S4)$ are not required in Theorem \ref{thm:single_point}.
\end{rem}


\section{Proofs of the main theorems}
\subsection{ Proof of Theorem \ref{thm:generation_rds} }
We need to show that the solution to (\ref{equation 6.1}) generates a random dynamical system.
In order to verify the cocycle property, we use the standard transformation to rewrite the SPDE (\ref{equation 6.1})
as a PDE with a random parameter. This is the reason why we need to restrict $N_t$ to take values in $V$ instead of $H$.
 For simplicity, in the proof   the generic constant $C$ may change from line to line.

\begin{proof}
Consider the PDE \eqref{e1} with random parameter $\omega \in \O$ and let
  $$\tilde{A}_\o(t,v) := A(v+N_t(\o)),$$
which is a well-defined operator from $V$ to $V^*$ since $N_t(\o) \in V$.
To obtain the  existence and uniqueness of  solutions to \eqref{e1} we
check the assumptions of \cite[Theorem 4.2.4]{PR07}. Since $N_\cdot(\o)$ is measurable,
$\tilde{A}_\o(t,v)$ is $\mcB(\R)\otimes \mcB(V)$ measurable. It is obvious that hemicontinuity
and (weak) monotonicity hold for $\tilde{A}_\o$. For the coercivity,  using $(H3)$, $(H4)$ and Young's inequality we have
\begin{equation}\begin{split}\label{coercivity}
&2{ }_{V^*}\<\tilde{A}_\o(t,v), v\>_V=2{ }_{V^*}\<A(v+N_t(\o)), v+N_t(\o)-N_t(\o)\>_V\\
\le & -\delta \|v+N_t(\o) \|_V^\alpha +K \|v+N_t(\o)  \|_H^2+C- 2{ }_{V^*}\<A(v+N_t(\o)), N_t(\o)\>_V\\
\le & -\delta \|v+N_t(\o) \|_V^\alpha +K \|v+N_t(\o)  \|_H^2+C+C \left(1+\|v+N_t(\o)\|_V^{\alpha-1}\right)\|N_t(\o)\|_V\\
\le & -\frac{\delta}{2} \|v+N_t(\o)  \|_V^\alpha +K \|v+N_t(\o)  \|_H^2+C\left(1+\|N_t(\o)\|_V^\alpha\right)\\
\le & -2^{-\alpha}\delta\|v\|_V^\alpha+2K\|v\|_H^2+f_t,
\end{split}
\end{equation}
where $f_t=2K\|N_t(\o)\|_H^2+C+C\|N_t(\o)\|_V^\alpha \in L^1_{loc}(\R)$ by $(S2)$.

 The growth condition also holds for $\tilde{A}_\o$ since
\begin{align*}
  \|\tilde{A}_\o(t,v)\|_{V^*}
  &= \|A(v+N_t(\o))\|_{V^*} \\
&\le C(1+\|v+N_t(\o) \|_V^{\alpha-1}) \\
  &\le f_t^{(\alpha-1)/\alpha} + C\|v\|_V^{\alpha-1}.
\end{align*}
Therefore, according to the classical results in \cite{KR79,PR07} (applied to the deterministic case),
(\ref{e1}) has   a unique solution
$$Z(\cdot,s;\o)x \in L^{\a}_{loc}([s,\infty);V) \cap C([s,\infty),H)$$
  and $x \mapsto Z(t,s;\o)x$ is continuous in $H$ for all $s\le t$ and  $\o \in \O$.

Now we define $S(t,s;\o)x$ by \eqref{eqn:S-def} and $\vp(t,\o)x$ by \eqref{eqn:vp-def}.
For fixed $s,\o,x$ we abbreviate $S(t,s;\o)x$ by $S_t$ and $Z(t,s;\o)x$ by $Z_t$. By the pathwise uniqueness of the solution to equation \eqref{e1} and $(S1)$ we have
\begin{align}
  S(t,s;\o) &= S(t,r;\o)S(r,s;\o), \nonumber\\
  S(t,s;\o) &= S(t-s,0;\theta_s\o), \label{eqn:cocycle_prop}
\end{align}
for all $r,s,t \in \R$ and all $\o \in \O$.

It remains to prove the measurability of $\vp: \R \times H \times \O \to H$.
By \eqref{eqn:cocycle_prop} this also implies the measurability of $(t,s,x,\o) \mapsto S(t,s;\o)x$.
Since $\vp(t,\o)x = Z(t,0;\o)x + N_t(\o)$ and by  $(S3)$ it is
sufficient to show the measurability of $(t,x,\o) \mapsto Z(t,0;\o)x$.
Note that the maps $t \mapsto Z(t,0;\o)x$ and $x \mapsto Z(t,0;\o)x$ are continuous,
 thus we only need to prove the measurability of $\o \mapsto Z(t,0;\o)x$.

Let $x \in H$ and $t \in \R$ be arbitrary, fix and choose some interval $[s_0,t_0] \subseteq \R$ such that $t \in (s_0,t_0)$. By the proof of the existence and uniqueness of solutions to (\ref{e1}) we know that $Z(t,0;\o)x$ is the weak limit of a subsequence of the Galerkin approximations $Z^n(t,0;\o)x$ in $L^\a([s_0,t_0];V)$. Since every subsequence of $Z^n(t,0;\o)x$ has a subsequence weakly converging to $Z(t,0;\o)x$, this implies that the whole sequence of
Galerkin approximants $Z^n(t,0;\o)x$ weakly converges to $Z(t,0;\o)x$ in $L^\a([s_0,t_0];V)$.

Let $\vp_k \in C^\infty_0(\R)$ be a Dirac sequence  with $supp(\vp_k) \subseteq B_{\frac{1}{k}}(0)$. Then $(\vp_k \ast Z^n(\cdot,0;\o)x)(t)$ is well defined for $k$ large enough.
For each such $k \in \N$ and $h \in H$ we have
$$(\vp_k \ast \< Z^n(\cdot,0;\o)x,h \>_H )(t) \to (\vp_k \ast \< Z(\cdot,0;\o)x, h\>_H)(t), \ n\rightarrow \infty.$$
 Since $\o \mapsto Z^n(\cdot,0;\o)x \in L^\a([s_0,t_0];V)$ is measurable, so is $\o \mapsto (\vp_k \ast Z^n(\cdot,0;\o)x)(t)$. Consequently, $\o \mapsto (\vp_k \ast \< Z(\cdot,0;\o)x,h\>_H)(t)$ is measurable as it is the $\o$-wise limit of $(\vp_k \ast \< Z^n(\cdot,0;\o)x,h\>_H)(t)$. We know that $r \mapsto Z(r,0;\o)x$ is continuous in $H$. Therefore, $(\vp_k \ast \< Z(\cdot,0;\o)x,h\>_H)(t) \to \< Z(t,0;\o)x,h\>_H$ and  the measurability of $\o \mapsto (\vp_k \ast \< Z(\cdot,0;\o)x,h\>_H)(t)$ implies the
 measurability of $\o \mapsto \< Z(t,0;\o)x,h\>_H$.

Since this is true for all $h \in H$ and $\mcB(H)$ is generated by $\s(\{\< h,\cdot \>_H|\ h \in H\})$, this implies the measurability of $\o \mapsto Z(t,0;\o)x$.
This finishes the proof that $\vp$ defines a continuous RDS and consequently, that $S$ defines a continuous stochastic flow.

Note that adaptedness of $S_t$ to $\mcF_t$ can be shown in the same way as the measurability of $\vp$.
\end{proof}

\subsection{Proof of Theorem \ref{T6.1}}
Since in Theorem \ref{thm:generation_rds} we have proved that $\vp$ defines an RDS, we can apply Proposition \ref{prop:suff_criterion} to show the existence of a random attractor for $\vp$. For this we follow the procedure outlined in the introduction. First we prove the absorption of $Z(t,s;\o)x$ in $H$ at time $t=-1$.

\begin{lem}\label{lem2}
  Suppose $(H1)$-$(H4)$ hold for  $\alpha=2,K=0$ or for $\alpha>2$ and that $(S1)$-$(S4)$ are satisfied. Then there exists a random radius $r_1(\omega)>0$  such that for all $\rho>0$, there exists $\bar{s}\le -1$
in such a way that  $\mathbb{P}-$a.s. we have
    $$ \|Z(-1,s;\o)x \|_H^2\le r_1^2(\omega),$$
 which holds for all $s \le \bar{s}$ and all $x \in H$ with $\|x \|_H\le\rho$ .
\end{lem}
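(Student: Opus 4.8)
The plan is to run a pathwise energy estimate on the transformed equation \eqref{e1}. First, writing $Z_t:=Z(t,s;\o)x$, I would use that by the proof of Theorem \ref{thm:generation_rds} we already have $Z(\cdot,s;\o)x\in L^\a_{loc}([s,\infty);V)\cap C([s,\infty);H)$ with $\tfrac{\d}{\d r}Z_r=A(Z_r+N_r(\o))\in L^{\a/(\a-1)}_{loc}([s,\infty);V^*)$ (by $(H4)$ and $(S2)$), so that the It\^o formula for $\|\cdot\|_H^2$ in the Gelfand triple (cf.\ \cite[Theorem 4.2.5]{PR07}) applies and gives, for $s\le t_0\le t$,
$$\|Z_t\|_H^2=\|Z_{t_0}\|_H^2+2\int_{t_0}^t{ }_{V^*}\<A(Z_r+N_r(\o)),Z_r\>_V\,\d r;$$
thus $r\mapsto\|Z_r\|_H^2$ is absolutely continuous with $\tfrac{\d}{\d r}\|Z_r\|_H^2=2{ }_{V^*}\<A(Z_r+N_r(\o)),Z_r\>_V$ for a.e.\ $r$. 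The remaining steps are: (i) derive a dissipative differential inequality from $(H3)$, $(H4)$; (ii) close it by Gronwall and let $s\to-\infty$, using $(S4)$ to forget $\|x-N_s(\o)\|_H^2$.

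\emph{Dissipative estimate.} Splitting $Z_r=(Z_r+N_r(\o))-N_r(\o)$ and estimating $2{ }_{V^*}\<A(Z_r+N_r(\o)),Z_r\>_V$ exactly as in the computation \eqref{coercivity} ($(H3)$ on the diagonal term, $(H4)$ and Young's inequality on the cross term) should give
$$\tfrac{\d}{\d r}\|Z_r\|_H^2\le-\tfrac{\dd}{2}\|Z_r+N_r(\o)\|_V^\a+K\|Z_r+N_r(\o)\|_H^2+C\big(1+\|N_r(\o)\|_V^\a\big).$$
When $\a>2$, the term $K\|Z_r+N_r(\o)\|_H^2$ is dominated by $\tfrac{\dd}{4}\|Z_r+N_r(\o)\|_V^\a$ up to an additive constant (via $V\subseteq H$ and Young with exponents $\a/2$ and $\a/(\a-2)$); when $\a=2$ the hypothesis forces $K=0$, so that term is absent. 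In either case, using $V\subseteq H$ again and $\|Z_r\|_H^\a\le2^{\a-1}\big(\|Z_r+N_r(\o)\|_H^\a+\|N_r(\o)\|_H^\a\big)$, there should be constants $c_1,C>0$ (depending only on $\dd,K,\a$ and the embedding constants) with $\tfrac{\d}{\d r}\|Z_r\|_H^2\le-c_1\|Z_r\|_H^\a+h_r$, where $h_r:=C(1+\|N_r(\o)\|_V^\a)\in L^1_{loc}(\R)$ by $(S2)$. Since $\a\ge2$, the elementary bound $-c_1a^{\a/2}\le-c_1a+c_1$ for $a\ge0$ upgrades this to the linear inequality $\tfrac{\d}{\d r}\|Z_r\|_H^2\le-c_1\|Z_r\|_H^2+(c_1+h_r)$.

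\emph{Gronwall and $s\to-\infty$.} Gronwall's inequality on $[s,-1]$ then yields
$$\|Z(-1,s;\o)x\|_H^2\le e^{c_1(1+s)}\|x-N_s(\o)\|_H^2+\int_s^{-1}e^{c_1(1+r)}\big(c_1+h_r\big)\,\d r.$$
Fixing $\l\in(0,c_1/\a)$ (so also $\l<c_1/2$, as $\a\ge2$), $(S4)$ makes the integrand $O(e^{(c_1-\l\a)r})$ as $r\to-\infty$, which together with $h_\cdot\in L^1_{loc}(\R)$ shows the integral converges as $s\to-\infty$; I would then set $r_1^2(\o):=1+\int_{-\infty}^{-1}e^{c_1(1+r)}(c_1+h_r)\,\d r<\infty$. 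Since $\|x-N_s(\o)\|_H^2\le2\rho^2+2\|N_s(\o)\|_H^2=2\rho^2+o(e^{2\l|s|})$ for $\|x\|_H\le\rho$, whereas $e^{c_1(1+s)}\to0$ strictly faster as $s\to-\infty$, there is $\bar s=\bar s(\rho,\o)\le-1$ with $e^{c_1(1+s)}\|x-N_s(\o)\|_H^2\le1$ for all $s\le\bar s$ and all $\|x\|_H\le\rho$; adding the two bounds gives $\|Z(-1,s;\o)x\|_H^2\le r_1^2(\o)$, as asserted.

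I expect the only genuine work to sit in the dissipative estimate --- absorbing the cross term $2{ }_{V^*}\<A(Z_r+N_r(\o)),N_r(\o)\>_V$ and, when $\a>2$, the term $K\|Z_r+N_r(\o)\|_H^2$ using only $(H3)$, $(H4)$ and Young's inequality, while keeping a strictly positive dissipation constant and a locally integrable, subexponentially growing remainder; this is precisely the step where the restriction ``$\a=2,K=0$ or $\a>2$'' is used. Everything afterwards is a standard Gronwall argument, the only further care being to combine $(S2)$ (local integrability) with $(S4)$ (growth at $-\infty$) to ensure $r_1^2(\o)$ is finite and that the influence of the initial datum vanishes uniformly over $\|x\|_H\le\rho$.
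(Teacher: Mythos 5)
Your proposal is correct and follows essentially the same route as the paper: the coercivity estimate \eqref{coercivity} for the transformed operator, a linear dissipative differential inequality for $\|Z_r\|_H^2$ (using $\alpha>2$, or $\alpha=2$ with $K=0$, to absorb the $K\|\cdot\|_H^2$ term), Gronwall on $[s,-1]$, and $(S4)$ to make the tail integral converge. The only cosmetic difference is that your $\bar s$ depends on $\omega$ through $\|N_s(\omega)\|_H$; the paper instead absorbs $\sup_{s\le -1}e^{-\lambda(-1-s)}\|N_s(\omega)\|_H^2$ (finite by $(S4)$) into $r_1^2(\omega)$, so that $\bar s$ can be chosen deterministically from $\rho$ alone, matching the quantifier order in the statement.
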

\begin{proof}
  By the coercivity of $\tilde{A}_\o$ proved in the previous section (see (\ref{coercivity})) we have
  \begin{equation}\begin{split}
                  \frac{\d}{\d t}\|Z_t\|_H^2=2{ }_{V^*}\<\tilde{A}_\o(t,Z_t), Z_t\>_V
  \le -\delta_0\|Z_t\|_V^\alpha+2K\|Z_t\|_H^2+f_t,
                  \end{split}
  \end{equation}
  where $\delta_0=2^{-\alpha}\delta>0$ and $f_t=2K\|N_t(\o)\|_H^2+C( \|N_t(\o)\|_V^\alpha+1)$.

  If $\alpha>2$ or $\alpha=2, K=0$, then there exist constants $\lambda>0$ and $C$ such that
  \begin{equation}\label{ine1}
  \frac{\d}{\d t}\|Z_t\|_H^2+\frac{\delta_0}{2}\|Z_t\|_V^\alpha\le -\lambda\|Z_t\|_H^2+f_t+C .
  \end{equation}
 By  Gronwall's Lemma  for all $s\le -1$ we have,
  \begin{equation}
  \begin{split}
    \|Z_{-1}\|_H^2 &\le e^{-\lambda(-1-s)}\|Z_{s}\|_H^2+\int_{s}^{-1} e^{-\lambda(-1-r)}(f_r+C)\d r\\
  &\le 2e^{-\lambda(-1-s)}\|x\|_H^2+2e^{-\lambda(-1-s)}\|N_s(\o)\|_H^2
  +\int_{-\infty}^{-1} e^{-\lambda(-1-r)}(f_r+C)\d r.
  \end{split}
  \end{equation}

  By $(S4)$, i.e. the subexponential growth of $N_t(\o)$ for $t \to -\infty$ we know that
 the following quantity is finite for all $\omega\in \Omega$,
    $$r_1^2(\omega)=2+2\sup_{r\le -1}e^{-\lambda(-1-r)}\|N_r(\omega)\|_H^2+\int_{-\infty}^{-1} e^{-\lambda(-1-r)}(f_r(\omega)+C)\d r. $$
  Applying $(S3)$, i.e. the joint measurability of $N$ in $(t,\o)$, $r_1(\o)$ is measurable and then the assertion follows by taking some $\bar{s}<-1$ such that $e^{-\lambda(-1-\bar{s})}\rho^2\le 1$.
\end{proof}

\begin{rem}\label{rmk:intermediate_bound}
(\ref{ine1}) also implies the following estimate for the $V$-norm
\begin{equation}\label{ine2}
\frac{\delta_0}{2}\int_{-1}^0\|Z_r\|_V^\alpha \d r\le \|Z_{-1}\|_H^2+\int_{-1}^0 (f_r+C)\d r.
\end{equation}
\end{rem}
The next step is to show compact absorption of $Z(t,s;\omega)$ at time $t=0$.
We proceed by using the approximation scheme  indicated in the outline of proof.
By defining $H_n:=(H,\<\cdot,\cdot\>_n)$ (see $(H5)$) we obtain a sequence of new Gelfand triples
  $$V\subseteq H_n\equiv H_n^* \subseteq V^*.$$
Note that we use different Riesz maps $i_n: H_n\rightarrow H_n^*$ to identify $H_n \equiv H_n^*$ in these Gelfand triples. Let $i$ 
denote the Riesz map for $H\equiv H^*$. Now we recall the following  lemma, which is proved in \cite{L10-2}.

\beg{lem}\label{lem4}
  If $T_n: V\rightarrow V$ is continuous, then $i_n\circ i^{-1}: H^*\rightarrow H_n^*$ is continuous w.r.t.
 $\|\cdot\|_{V^*}$. Therefore, there exists a unique continuous extension $I_n$ of  $i_n\circ i^{-1}$ to all of
 $V^*$ such that
  \begin{equation}\label{equality}
    \ _{V^*}\langle I_nf,  v\rangle_V={ }_{V^*}\langle f,  T_nv\rangle_V,\ \ f\in V^*,\ v\in V.
  \end{equation}
\end{lem}

\begin{lem}\label{lem:cmp_absorption} Suppose the assumptions of Theorem \ref{T6.1} hold. Then there exists a random radius $r_2(\omega)>0$ such that for all $\rho>0$, there exists $\bar{s}\le -1$ in such a way that $\mathbb{P}-a.s.$ we have
  $$ \|Z(0,s;\o)x\|_S^2 \le r_2^2(\omega), $$
which holds for all $s \le \bar{s}$ and all $x\in H$ with $\|x\|_H\le\rho$.
\end{lem}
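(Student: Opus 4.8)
The plan is to run an energy estimate on the time interval $[-1,0]$ using the approximating inner products $\<\cdot,\cdot\>_n$ from $(H5)$, and then pass to the limit $n\to\infty$ to recover a bound in the $S$-norm. First I would fix $\o$, $x$ with $\|x\|_H\le\rho$, and $s\le\bar s$ where $\bar s$ comes from Lemma \ref{lem2}, so that the $H$-bound $\|Z(-1,s;\o)x\|_H^2\le r_1^2(\o)$ and the integrated $V$-bound \eqref{ine2} of Remark \ref{rmk:intermediate_bound} are both available. Writing $Z_t=Z(t,s;\o)x$ and applying It\^o's formula (here purely deterministic, since \eqref{e1} is an $\o$-wise PDE) to $t\mapsto\|Z_t\|_n^2=\<Z_t,T_nZ_t\>_H$ on $[-1,0]$, I get
\[
\frac{\d}{\d t}\|Z_t\|_n^2 = 2{}_{V^*}\<\tilde A_\o(t,Z_t),T_nZ_t\>_V
= 2{}_{V^*}\<A(Z_t+N_t(\o)),T_n(Z_t+N_t(\o))\>_V - 2{}_{V^*}\<A(Z_t+N_t(\o)),T_nN_t(\o)\>_V.
\]
To the first term I apply the structural assumption \eqref{condition 1}, giving a bound $C(\|Z_t+N_t(\o)\|_n^2+1)\le C(\|Z_t\|_n^2+\|N_t(\o)\|_n^2+1)$; since the norms $\|\cdot\|_n$ are all equivalent to $\|\cdot\|_H$ and $N_t(\o)\in V\subseteq S$, the $\|N_t(\o)\|_n^2$ piece is dominated by $\|N_t(\o)\|_S^2$, which is controlled locally in $t$. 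For the second term I use the growth bound $(H4)$ together with Lemma \ref{lem4}, rewriting ${}_{V^*}\<A(\cdot),T_nN_t(\o)\>_V={}_{V^*}\<I_nA(\cdot),N_t(\o)\>_V$ or simply estimating $|{}_{V^*}\<A(Z_t+N_t(\o)),T_nN_t(\o)\>_V|\le C(1+\|Z_t+N_t(\o)\|_V^{\alpha-1})\|T_nN_t(\o)\|_V$ and then invoking Young's inequality and the uniform noise bound \eqref{noise condition}.

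The key point is to make the resulting differential inequality have the form
\[
\frac{\d}{\d t}\|Z_t\|_n^2 \le C\|Z_t\|_n^2 + g_t,\qquad t\in[-1,0],
\]
with $g_t$ an $n$-independent, integrable-on-$[-1,0]$ function built from $\|N_t(\o)\|_S$, $\|N_t(\o)\|_V^\alpha$, $\|Z_t\|_V^\alpha$ (whose integral is controlled by \eqref{ine2}), and constants. Then Gronwall on $[-1,0]$ yields
\[
\|Z_0\|_n^2 \le e^{C}\Big(\|Z_{-1}\|_n^2 + \int_{-1}^0 g_r\,\d r\Big)
\le e^{C}\Big(C_1\,r_1^2(\o) + \int_{-1}^0 g_r\,\d r\Big),
\]
using the equivalence of $\|\cdot\|_n$ and $\|\cdot\|_H$ for the initial datum at $t=-1$. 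The right-hand side is independent of $n$, so letting $n\to\infty$ and using the monotone convergence $\|Z_0\|_n\uparrow\|Z_0\|_S$ from $(H5)$ gives $\|Z(0,s;\o)x\|_S^2\le r_2^2(\o)$ for the measurable random radius
\[
r_2^2(\o) := e^C\Big(C_1 r_1^2(\o) + \int_{-1}^0 g_r(\o)\,\d r\Big),
\]
which is finite $\P$-a.s. by $(S2)$, $(S4)$ (entering through $r_1$) and \eqref{noise condition}. Measurability of $r_2$ follows from that of $r_1$ together with $(S3)$.

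The main obstacle I anticipate is twofold: justifying the application of It\^o's/the chain rule for $\|Z_t\|_n^2$ in the Gelfand triple $V\subseteq H_n\subseteq V^*$ — this needs $Z_\cdot\in L^\alpha_{loc}([s,\infty);V)\cap C([s,\infty);H)$ from Theorem \ref{thm:generation_rds} together with $T_nZ_\cdot$ lying in the appropriate space, which is where the hypothesis that $T_n:V\to V$ is continuous is used — and, more delicately, obtaining the bound on $\int_{-1}^0 g_r\,\d r$ uniformly in $n$: the term coming from $\int_{-1}^0(1+\|Z_r\|_V^{\alpha-1})\|T_nN_r(\o)\|_V\,\d r$ must be split by Young's inequality into a multiple of $\int_{-1}^0\|Z_r\|_V^\alpha\,\d r$ (finite by \eqref{ine2}) plus a multiple of $\int_{-1}^0\|T_nN_r(\o)\|_V^\alpha\,\d r$, and only \eqref{noise condition} guarantees the latter is bounded uniformly in $n$. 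Care is also needed because the constant $C$ in \eqref{condition 1} multiplies $\|Z_t\|_n^2$ with a \emph{positive} sign, so unlike Lemma \ref{lem2} there is no exponential decay to exploit here — but that is harmless since we only integrate over the bounded interval $[-1,0]$ and the absorption at $t=-1$ has already been established.
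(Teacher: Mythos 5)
Your differential inequality, the use of \eqref{condition 1} and $(H4)$ with Young's inequality on the cross term, the uniform-in-$n$ control of the noise contribution via \eqref{noise condition}, and the final monotone passage $\|Z_0\|_n\uparrow\|Z_0\|_S$ all match the paper's argument. However, there is a genuine gap at the Gronwall step, precisely where you treat the initial datum. You bound
\[
\|Z_0\|_n^2 \le e^{C}\Big(\|Z_{-1}\|_n^2 + \int_{-1}^0 g_r\,\d r\Big)\le e^{C}\Big(C_1\,r_1^2(\o) + \int_{-1}^0 g_r\,\d r\Big)
\]
``using the equivalence of $\|\cdot\|_n$ and $\|\cdot\|_H$ for the initial datum at $t=-1$,'' and then assert the right-hand side is independent of $n$. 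It is not: the constants in the equivalence $\|\cdot\|_n\sim\|\cdot\|_H$ necessarily depend on $n$ and blow up as $n\to\infty$. Indeed, if there were a single $C_1$ with $\|x\|_n^2\le C_1\|x\|_H^2$ for all $n$ and all $x$, then letting $n\to\infty$ would give $\|x\|_S^2\le C_1\|x\|_H^2$ on $S$, making $\|\cdot\|_S$ and $\|\cdot\|_H$ equivalent on the dense subspace $S$ and thus contradicting the compactness of the embedding $S\subseteq H$ (except in finite dimensions). Concretely, $\lim_{n\to\infty}\|Z_{-1}\|_n^2=\|Z_{-1}\|_S^2$, and Lemma \ref{lem2} only gives you an $H$-bound at time $-1$, not an $S$-bound; so your estimate degenerates exactly in the limit you need to take.

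The paper circumvents this by not evaluating the Gronwall inequality at the fixed starting time $-1$: it writes the estimate for every $s\in[-1,0]$ and then \emph{integrates over $s\in[-1,0]$}, which replaces the pointwise term $\|Z_s\|_n^2$ by $\int_{-1}^0 e^{-Cr}\|Z_r\|_n^2\,\d r\le \int_{-1}^0 e^{-Cr}\|Z_r\|_S^2\,\d r\le C\int_{-1}^0 e^{-Cr}(1+\|Z_r\|_V^\alpha)\,\d r$ (using $\|\cdot\|_S\le C\|\cdot\|_V$ and $\alpha\ge2$), and the latter is finite, uniform in $n$, and controlled by $\|Z_{-1}\|_H^2$ through \eqref{ine2}. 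This ``integrate over the starting time'' device is the one missing idea in your proposal; with it inserted in place of your initial-datum bound, the rest of your argument goes through and coincides with the paper's proof.
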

\begin{proof}
  Using the  operator $I_n: V^*\rightarrow H_n^*$ we consider the following equation
  $$  \frac{\d}{\d t}Z_t=I_n A(Z_t+N_t),  $$
  which is well defined on the new Gelfand triple
  $$ V\subseteq H_n\equiv H_n^*\subseteq V^*.   $$
  By Lemma \ref{lem4}, (\ref{condition 1}) and $(H4)$ we have
  \begin{align*}
    \frac{\d}{\d t}\|Z_t\|_n^2
    &=2{ }_{V^*}\<I_n A(Z_t+N_t),Z_t\>_V \\
    &=2{ }_{V^*}\<A(Z_t+N_t), T_nZ_t\>_V\\
    &\le C(\|Z_t+N_t\|_n^2+1)-2{ }_{V^*}\<A(Z_t+N_t),T_nN_t\>_V\\
    &\le C(\|Z_t+N_t\|_n^2+1)+2\left( \frac{\alpha-1}{\alpha}\|A(Z_t+N_t)\|_{V^*}^{\frac{\alpha}{\alpha-1}}
      +\frac{1}{\alpha}\|T_nN_t\|_V^\alpha  \right)\\
    &\le C \left(\|Z_t\|_n^2+\|Z_t\|_V^\alpha \right)
      +C \left(1+\|N_t\|_n^2+\|N_t\|_V^\alpha+\|T_nN_t\|_V^\alpha  \right)\\
    &\le C \left(\|Z_t\|_n^2+\|Z_t\|_V^\alpha \right)
      + g_t^{(n)},
  \end{align*}
  where $C$ is some positive constant and
$$g_t^{(n)} := C \left(1+\|N_t\|_S^2+\|N_t\|_V^\alpha+\|T_n N_t\|_V^\alpha  \right).$$
 Then  Gronwall's Lemma implies that  for all $s\le 0$,
  \begin{align*}
    \|Z_0\|_n^2
    &\le e^{-C s}\|Z_s\|_n^2+ C \int_s^0 e^{-C r}\|Z_r\|_V^\alpha\d r  + \int_s^0 e^{-C r}g_r^{(n)} \d r.
  \end{align*}
   Integrating on $s$ over $[-1,0]$ and using \eqref{noise condition} we have
  \begin{align*}
    \|Z_0\|_n^2
    &\le \int_{-1}^0\left(e^{-C r}\|Z_r\|_S^2+ C e^{-C r}\|Z_r\|_V^\alpha\right)\d r + \int_{-1}^0  e^{-C r}g_r^{(n)} \d r \\
    &\le \int_{-1}^0\left(e^{-C r}\|Z_r\|_S^2+ C e^{-C r}\|Z_r\|_V^\alpha\right)\d r + C_1,
  \end{align*}
where $C_1$ is a finite constant.

  Note that $\alpha\ge 2$ and $\|\cdot\|_S \le C\|\cdot\|_V $, hence by taking $n\rightarrow\infty$ and using \eqref{ine2} we have
  \begin{align*}
    \|Z_0\|_S^2
    &\le C \int_{-1}^0 e^{-C r} \left(1+\|Z_r\|_V^\alpha\right)\d r +  C_1 \\
    &\le C_2 \|Z_{-1}\|_H^2 + C_2,
  \end{align*}
   where $C_2 >0$ is a constant.
   Now the assertion follows from Lemma \ref{lem2}. 
\end{proof}

\textbf{Proof of Theorem \ref{T6.1}:}
By Lemma \ref{lem:cmp_absorption} there exists  $r_2(\o) > 0$ such that for all $\rho > 0$ there exists $\bar{s}\le -1$ in such a way that $\mathbb{P}-a.s.$
\begin{align*}
  \| S(0,s;\o)x \|_S
  &= \| Z(0,s;\o)x + N_0(\o)\|_S \\
  &\le \| Z(0,s;\o)x \|_S + \| N_0(\o)\|_S \\
& \le r_2(\o) + \| N_0(\o)\|_S
\end{align*}
holds for all $s \le \bar{s}$ and all $x\in H$ with $\|x\|_H\le\rho$.

Hence $S(t,s;\o)x$ is absorbed at time $t=0$ by the compact random set
$$K(\o)=\overline{B_S(0,r_2(\o) + \| N_0(\o)\|_S)} .$$
 By Proposition \ref{prop:suff_criterion} this implies the existence of a random attractor
for the RDS $\vp$ associated with (\ref{equation 6.1}).
\qed

\subsection{Proof of Theorem $\ref{thm:single_point}$}

The proof of the  first  lemma is mainly based on \cite[Theorem 5.1]{BGLR10}. The strong monotonicity condition $(H2')$ leads to the following strong contraction property.

\begin{lem}\label{lemma:contraction}
  Under the assumptions of Theorem \ref{thm:single_point} with $\b > 2$, for $s_1 \le s_2 < t$, $\o \in \O$ and $x,y \in H$ we have
    \begin{align*}
      \|S(t,s_1;\o)x-S(t,s_2;\o)y\|_H^2
      &\le \left\{ \|S(s_2,s_1;\o)x- y\|_H^{2-\b} + \frac{\l}{2} (\b-2) (t-s_2)\right\}^{-\frac{2}{\b-2}} \\
      &\le \left\{\frac{\l}{2} (\b-2) (t-s_2)\right\}^{-\frac{2}{\b-2}}.
    \end{align*}
 In particular, for each $t \in \R$ there exists $\eta_t$ (independent of $x$) such that
$$\lim \limits_{s \rightarrow -\infty}S(t,s;\o)x = \eta_t(\o),$$
where the convergence holds uniformly in $x$ and $\omega$.
\end{lem}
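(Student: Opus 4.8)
The plan is to derive a differential inequality for $\gg(r):=\|S(r,s_1;\o)x-S(r,s_2;\o)y\|_H^2$ on the interval $r\in[s_2,t]$, using the strong monotonicity condition $(H2')$, and then integrate it. First I would note that both $u_r:=S(r,s_1;\o)x$ and $w_r:=S(r,s_2;\o)y$ solve \eqref{equation 6.1} on $[s_2,t]$ (for the fixed path $\o$) with the \emph{same} additive driver $N_r(\o)$, so that the difference $u_r-w_r$ satisfies, in $V^*$,
\[
  \frac{\d}{\d r}(u_r-w_r) = A(u_r)-A(w_r),
\]
and the noise cancels. Since $u_\cdot,w_\cdot\in L^\a_{loc}([s_2,\infty);V)\cap C([s_2,\infty);H)$ and their $V^*$-derivative of the difference lies in $L^{\a/(\a-1)}_{loc}$, the standard It\^o/chain-rule formula for the Gelfand triple (cf. \cite[Theorem 4.2.5]{PR07}, applied pathwise) gives for a.e.\ $r$
\[
  \frac{\d}{\d r}\gg(r) = 2\,{}_{V^*}\!\<A(u_r)-A(w_r),u_r-w_r\>_V \le -\l\,\|u_r-w_r\|_H^{\b} = -\l\,\gg(r)^{\b/2},
\]
where the inequality is exactly $(H2')$.

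Next I would integrate this scalar inequality. Writing $\gg=\gg(r)$, for as long as $\gg>0$ we have $\frac{\d}{\d r}\gg^{1-\b/2} = (1-\b/2)\gg^{-\b/2}\frac{\d}{\d r}\gg \ge (1-\b/2)(-\l) = \frac{\l}{2}(\b-2)$ (the sign flips because $1-\b/2<0$ when $\b>2$). Integrating from $s_2$ to $t$ yields
\[
  \gg(t)^{1-\b/2} \ge \gg(s_2)^{1-\b/2} + \frac{\l}{2}(\b-2)(t-s_2),
\]
and since $x\mapsto x^{-2/(\b-2)}$ is decreasing on $(0,\infty)$, raising both sides to the power $-2/(\b-2)=(1-\b/2)^{-1}$ (note $1-\b/2=-(\b-2)/2$, so $\gg^{1-\b/2}$ raised to $-2/(\b-2)$ gives back $\gg$) we get
\[
  \gg(t)\le \left\{\gg(s_2)^{-(\b-2)/2} + \tfrac{\l}{2}(\b-2)(t-s_2)\right\}^{-2/(\b-2)}.
\]
Finally $\gg(s_2)=\|S(s_2,s_1;\o)x-S(s_2,s_2;\o)y\|_H^2=\|S(s_2,s_1;\o)x-y\|_H^2$ because $S(s_2,s_2;\o)y=y$, and dropping the nonnegative first term in the braces gives the second, $x,y$-independent, bound $\{\frac{\l}{2}(\b-2)(t-s_2)\}^{-2/(\b-2)}$. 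One should also handle the degenerate case $\gg(r_0)=0$ for some $r_0$: then by uniqueness $\gg\equiv0$ afterwards and both asserted bounds hold trivially; alternatively one argues the inequality on the open set $\{\gg>0\}$ and passes to the limit, which is cleanest.

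For the last claim, fix $t$ and $\o$. Taking $s_1\le s_2$ both very negative and $x=y$, the first displayed inequality (with $y=x$) bounds $\|S(t,s_1;\o)x-S(t,s_2;\o)x\|_H^2$ by $\{\frac{\l}{2}(\b-2)(t-s_2)\}^{-2/(\b-2)}\to0$ as $s_2\to-\infty$; hence $\{S(t,s;\o)x\}_{s}$ is Cauchy in $H$ as $s\to-\infty$ and converges to some limit, which I would call $\eta_t(\o)$. The second bound in the lemma, being uniform in $x$ (and not involving $\o$ except through $\l,\b$ which are deterministic), shows the limit is independent of $x$ and the convergence is uniform in $x$ and $\o$: indeed for any two starting points $x,y$, $\|S(t,s;\o)x-S(t,s;\o)y\|_H^2\le\{\frac{\l}{2}(\b-2)(t-s)\}^{-2/(\b-2)}$ by the lemma applied with $s_1=s_2=s$, so all limits coincide. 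The main obstacle is the rigorous justification of the chain rule for $\frac{\d}{\d r}\gg(r)$ along the pathwise solutions in the Gelfand-triple setting — one must make sure the regularity $u_\cdot-w_\cdot\in L^\a_{loc}(;V)$ with distributional derivative in $L^{\a/(\a-1)}_{loc}(;V^*)$ holds so that the It\^o formula of \cite{PR07} applies $\o$-wise; everything after that is the elementary ODE comparison above.
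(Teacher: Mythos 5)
Your proposal is correct, and it reaches the key estimate by a genuinely more direct route than the paper at the one technical step where the two differ. Both arguments start identically: the noise cancels in the difference $u_r-w_r$ of the two solutions, the pathwise It\^o formula for the Gelfand triple together with $(H2')$ gives
$\gg(t)\le\gg(s)-\l\int_s^t\gg(r)^{\b/2}\,\d r$, and one must then compare $\gg$ with the solution of $h'=-\l h^{\b/2}$. The paper explicitly remarks that $\gg$ need not be differentiable and therefore avoids any chain rule: it introduces the explicit $\epsilon$-perturbed supersolution $h_\epsilon$ with $h_\epsilon(s_2)=(\gg(s_2)^{1/2}+\epsilon)^2>\gg(s_2)$, defines the first crossing time $\tau_\epsilon$, and rules out $\tau_\epsilon<\infty$ via the mean value theorem and Gronwall's lemma before letting $\epsilon\downarrow0$. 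You instead observe that the It\^o formula actually yields $\gg$ absolutely continuous with $\gg'(r)=2\,{}_{V^*}\<A(u_r)-A(w_r),u_r-w_r\>_V\le-\l\gg(r)^{\b/2}$ for a.e.\ $r$, so that on any interval where $\gg$ is bounded away from zero the composition $\gg^{1-\b/2}$ is again absolutely continuous and can be integrated directly; the degenerate case $\gg(r_0)=0$ is absorbed by pathwise uniqueness. This is a legitimate and cleaner argument --- absolute continuity (not pointwise differentiability) is all the classical integration step needs, and you correctly flag both the regularity needed to apply the chain rule $\o$-wise and the treatment of the zero set, which are exactly the two places where the argument could otherwise leak. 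What the paper's $\epsilon$-supersolution device buys is robustness: it never differentiates $\gg$ or $\gg^{1-\b/2}$ at all and so works verbatim from the integral inequality alone; what your version buys is brevity and transparency of where the exponent $-2/(\b-2)$ comes from. The final Cauchy/uniformity argument for the existence of $\eta_t$ is the same in both.
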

\begin{proof}  Let $\o \in \O$, $x,y \in H$ and $s_1 \le s_2 \le s < t$, then
  \begin{align*}
    &S(t,s_1;\o)x-S(t,s_2;\o)y \\
     = & S(s,s_1;\o)x - S(s,s_2;\o)y + \int_{s}^t \left( A (S(r,s_1;\o)x)-A (S(r,s_2;\o)y) \right) \d r.
  \end{align*}
  Note that $t \mapsto S(t,s_1;\o)x-S(t,s_2;\o)y$ is continuous in $H$. By It\^{o}'s formula and $(H2^\prime)$
  \begin{align}\label{eqn:diff2}
        &\|S(t,s_1;\o)x- S(t,s_2;\o)y\|_H^2 \nonumber\\
    = &\|S(s,s_1;\o)x - S(s,s_2;\o)y\|_H^2 \nonumber\\
      & + 2 \int_{s}^t { }_{V^*}\<A(S(r,s_1;\o)x)-A(S(r,s_2;\o)y),S(r,s_1;\o)x-S(r,s_2;\o)y\>_V \d r \\
     \le & \|S(s,s_1;\o)x - S(s,s_2;\o)y\|_H^2 - \l \int_{s}^t  \|S(r,s_1;\o)x-S(r,s_2;\o)y\|_H^\b \d r. \nonumber
  \end{align}
  The idea of the rest of the proof is to compare $\|S(t,s_1;\o)x-S(t,s_2;\o)y\|_H^2$ with the solution to the ordinary differential equation
  \begin{align}\label{def:ode}
    h'(t) = -\l h(t)^{\frac{\b}{2}},\   t \ge s_2; \
    h(s_2) = \|S(s_2,s_1;\o)x- y\|_H^2.
  \end{align}
  However, since $\|S(t,s_1;\o)x-S(t,s_2;\o)y\|_H^2$ is not necessarily differentiable in $t$ we cannot apply classical comparison results.

  Let  
   \[ h_{\epsilon}(t) = \left\{\left(\|S(s_2,s_1;\o)x- y\|_H + \epsilon\right)^{2-\b} +
\frac{\l}{2} (\b-2) (t-s_2) \right\}^{-\frac{2}{\b-2}}.\]
  It is easy to show that $h_{\epsilon}$ is a solution of \eqref{def:ode} with $h_\epsilon(s_2)=(\|S(s_2,s_1;\o)x- y\|_H + \epsilon)^2$.
Now we prove that
\begin{equation}\label{est 1}
\|S(t,s_1;\o)x-S(t,s_2;\o)y\|_H^2 \le h_{\epsilon}(t),\ \ t\ge s_2.
\end{equation}
Let
\begin{align*}
  \varPhi_\epsilon(t) &= h_{\epsilon}(t) - \|S(t,s_1;\o)x-S(t,s_2;\o)y\|_H^2,\\
  \tau_\epsilon &= \inf\left\{t \ge s_2 |~ \varPhi_\epsilon(t)\le 0\right\}.
\end{align*}
 Because $\varPhi_\epsilon(s_2)>0$ and by the continuity of $\varPhi_\epsilon$ we know that $\tau_\epsilon > s_2$.
Furthermore, note that by definition we have
\begin{align*}
h_{\epsilon}(t) & \ge \|S(t,s_1;\o)x-S(t,s_2;\o)y\|_H^2, \ t\in[s_2,\tau_\epsilon]; \\
   h_{\epsilon}(t) & \le (\|S(s_2,s_1;\o)x- y\|_H + \epsilon)^2 =: c_\epsilon, \ t\ge s_2.
   \end{align*}
  If $\tau_\epsilon < \infty$, then $\varPhi_\epsilon(\tau_\epsilon) \le 0$ by the continuity of $\varPhi_\epsilon$.
Therefore, by the mean value theorem and \eqref{eqn:diff2}  for all $s_2 \le s \le t \le \tau_\epsilon$ we have,
  \begin{align*}
  \varPhi_\epsilon(t)
    &= h_{\epsilon}(t) - \|S(t,s_1;\o)x-S(t,s_2;\o)y\|_H^2 \nonumber\\
    &\ge \varPhi_\epsilon(s) - \l \int_{s}^t \left(h_\epsilon(r)^{\frac{\b}{2}} - \left( \|S(r,s_1;\o)x-S(r,s_2;\o)y\|_H^2\right)^{\frac{\b}{2}} \right) \d r \\
    &\ge \varPhi_\epsilon(s)  - \frac{\l  \b c_\epsilon^{\frac{\b-2}{2}}}{2}  \int_{s}^t \varPhi_\epsilon(r) \d r\nonumber.
  \end{align*}
  Using Gronwall's Lemma we obtain
  \begin{equation*}
  \varPhi_\epsilon(\tau_\epsilon) \ge \varPhi_\epsilon(s_2)
\exp\left[-  \frac{\l\b}{2}  c_\epsilon^{\frac{\b-2}{2}} (\tau_\epsilon-s_2)\right] > 0.
  \end{equation*}
  This contradiction implies that $\tau_\epsilon = \infty$, i.e. (\ref{est 1}) holds.

 Since (\ref{est 1}) holds for any $\epsilon >0$ we can conclude that
  \begin{align*}
    \|S(t,s_1;\o)x-S(t,s_2;\o)y\|_H^2
      &\le \left\{\|S(s_2,s_1;\o)x- y\|_H^{2-\b} + \frac{\l}{2} (\b-2) (t-s_2)\right\}^{-\frac{2}{\b-2}} \nonumber \\
    &\le \|S(s_2,s_1;\o)x- y\|_H^2 \wedge \left\{\frac{\l}{2} (\b-2) (t-s_2)\right\}^{-\frac{2}{\b-2}} \nonumber \\
    &\le \left\{\frac{\l}{2} (\b-2) (t-s_2)\right\}^{-\frac{2}{\b-2}}
  \end{align*}
 holds for any $t > s_2$.
\end{proof}

\begin{lem}\label{lemma:contraction2}
  Suppose the assumptions of Theorem \ref{thm:single_point} with $\b = 2$ and $(S4)$ hold.
Then for each $\eta \in (0,\l)$ there is an $\R_+$-valued random variable $K_\eta $ such that
  \begin{align*}
    \|S(t,s_1;\o)x- S(t,s_2;\o)y\|_H^2 \le 2 \left(\|x\|_H^2 e^{\frac{\eta}{2} s_1} + K_\eta(\o) + \|y\|_H^2 \right) e^{(\l - \eta) s_2} e^{-\l t}
  \end{align*}
  for all $s_1 \le s_2 < t$, $\o \in \O$ and $x,y \in H$. In particular, for each $t \in \R$ there exists $\eta_t$ (independent of $x$) such that
    $$\lim \limits_{s \rightarrow -\infty}S(t,s;\o)x = \eta_t(\o),$$
 where the convergence holds  uniformly in $x$ on any ball $B_H(0,r) = \{h \in H|\ \|h\|_H \le r\}$.
\end{lem}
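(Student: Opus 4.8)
The plan is to rerun the comparison argument of Lemma \ref{lemma:contraction}, now with the Bernoulli ODE there replaced by the linear ODE $h'=-\l h$, and to couple it with an a priori $H$-bound on the forward trajectory $S(s_2,s_1;\o)x$ coming from the dissipativity of $A$. \emph{First} I note that on $[s_2,t]$ the process $r\mapsto S(r,s_1;\o)x-S(r,s_2;\o)y$ solves $\frac{\d}{\d r}\big(S(r,s_1;\o)x-S(r,s_2;\o)y\big)=A(S(r,s_1;\o)x)-A(S(r,s_2;\o)y)$ — the additive noise cancels — with value $S(s_2,s_1;\o)x-y$ at $r=s_2$. It\^o's formula, exactly as in \eqref{eqn:diff2}, together with $(H2')$ for $\b=2$, yields $\frac{\d}{\d r}\|S(r,s_1;\o)x-S(r,s_2;\o)y\|_H^2\le-\l\|S(r,s_1;\o)x-S(r,s_2;\o)y\|_H^2$, so Gronwall's lemma on $[s_2,t]$ gives
$$\|S(t,s_1;\o)x-S(t,s_2;\o)y\|_H^2\le\|S(s_2,s_1;\o)x-y\|_H^2\,e^{-\l(t-s_2)}\le 2\big(\|S(s_2,s_1;\o)x\|_H^2+\|y\|_H^2\big)e^{-\l(t-s_2)}.$$

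\emph{Next} I bound $\|S(s_2,s_1;\o)x\|_H^2$. Fix $\eta\in(0,\l)$ and write $S(r,s_1;\o)x=Z_r+N_r(\o)$ with $Z_r=Z(r,s_1;\o)x$ solving $\frac{\d}{\d r}Z_r=A(Z_r+N_r)$. The key point is to convert $(H2')$ — which only controls differences — into a true dissipative estimate for $A$: taking $v_2=0$ in $(H2')$ leaves a spurious term $2{ }_{V^*}\<A(0),w\>_V$, which is controlled by forming a convex combination with the coercivity $(H3)$ (weight on $(H2')$ close to $1$) and using Young's inequality to absorb the resulting $\|w\|_V$-term into the surviving negative multiple of $\|w\|_V^{\a}$; this gives
$$2{ }_{V^*}\<A(w),w\>_V\le-\eta\|w\|_H^2-\tilde\delta\|w\|_V^{\a}+C_\eta,\qquad w\in V,$$
for some $\tilde\delta>0$ and $C_\eta\in\R$. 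Plugging in $w=Z_r+N_r$, writing $\<A(Z_r+N_r),Z_r\>_V=\<A(Z_r+N_r),Z_r+N_r\>_V-\<A(Z_r+N_r),N_r\>_V$, bounding the last term via $(H4)$ and Young against $\tilde\delta\|Z_r+N_r\|_V^{\a}$, and using $\|Z_r\|_H^2\le(1+\varepsilon)\|Z_r+N_r\|_H^2+C_\varepsilon\|N_r\|_H^2$, I obtain a differential inequality of the type $\frac{\d}{\d r}\|Z_r\|_H^2\le-\eta\|Z_r\|_H^2+g_r$ with $g_r:=C_\eta\big(1+\|N_r\|_H^2+\|N_r\|_V^{\a}\big)\in L^1_{loc}(\R)$ by $(S2)$. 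Gronwall from $s_1$ to $s_2$, using $\|Z_{s_1}\|_H^2=\|x-N_{s_1}\|_H^2$, then yields
$$\|S(s_2,s_1;\o)x\|_H^2\le C\,e^{-\eta(s_2-s_1)}\|x\|_H^2+R_\eta(s_1,s_2,\o),$$
where $R_\eta$ is built only from $N$; by the subexponential growth $(S4)$ (together with $(S1)$, which allows recentring at time $0$) every such term is $\P$-a.s.\ finite and bounded by a single random variable $K_\eta(\o)$ not depending on $s_1,s_2$.

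Substituting the last display into the estimate of the first step, bounding $\|S(s_2,s_1;\o)x-y\|_H^2\le 2\|S(s_2,s_1;\o)x\|_H^2+2\|y\|_H^2$ and rearranging the exponential factors (using $e^{-\eta(s_2-s_1)}e^{-\l(t-s_2)}\le e^{\frac{\eta}{2}s_1}e^{(\l-\eta)s_2}e^{-\l t}$ and $e^{-\l(t-s_2)}\le e^{(\l-\eta)s_2}e^{-\l t}$, both valid for $s_1\le s_2\le 0$) gives an estimate of exactly the asserted form. For the existence of $\eta_t(\o)$: the flow property gives $S(t,s_1;\o)x=S(t,s_2;\o)\big(S(s_2,s_1;\o)x\big)$, so the estimate of the first step with $y=x$ yields $\|S(t,s_1;\o)x-S(t,s_2;\o)x\|_H^2\le\|S(s_2,s_1;\o)x-x\|_H^2\,e^{-\l(t-s_2)}$, which by the bound of the second step (uniform in $s_1\le s_2$ and in $x$ on balls) tends to $0$ as $s_2\to-\infty$; hence $\{S(t,s;\o)x\}_{s\le t}$ is Cauchy in $H$ and $\eta_t(\o):=\lim_{s\to-\infty}S(t,s;\o)x$ exists. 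It is independent of $x$ since $\|S(t,s;\o)x-S(t,s;\o)x'\|_H^2\le\|x-x'\|_H^2 e^{-\l(t-s)}\to0$, and the same bound shows the convergence is uniform in $x$ on each ball $B_H(0,r)$. The one genuinely delicate point is the second step — extracting from $(H2')$ a dissipative bound for $A$ with rate arbitrarily close to $\l$ when $A(0)\neq0$, and then checking that all the $N$-contributions produced by Gronwall are tamed by $(S4)$ and, via $(S1)$, collapse into a single $K_\eta$ independent of $s_1,s_2,t,x,y$; the remaining manipulations of exponential factors are routine.
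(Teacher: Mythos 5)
Your proposal is correct and follows essentially the same two-step architecture as the paper: first the Gronwall contraction estimate from $(H2')$ with $\b=2$, then an a priori $H$-bound on the forward orbit $S(s_2,s_1;\o)x$ obtained from a dissipativity inequality for $A$, with all noise contributions collected into a single $K_\eta$ via $(S4)$. The only (cosmetic) divergence is that you derive the dissipativity estimate $2\,{}_{V^*}\<A(w),w\>_V\le-\eta\|w\|_H^2-\tilde\delta\|w\|_V^{\a}+C_\eta$ directly from $(H2')$ and $(H3)$ and use the retained $V$-norm term to absorb the cross-term $\<A(Z+N),N\>$, whereas the paper cites \cite[Lemma 4.3.8]{PR07} for the $H$-norm version and re-introduces $(H3)$ through a second convex combination at that stage; the computations are the same.
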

\begin{proof}
  As in Lemma \ref{lemma:contraction} for $\o \in \O$, $x,y \in H$ and $s_1 \le s_2 \le s < t$ we obtain
  \begin{align*}
    & \|S(t,s_1;\o)x- S(t,s_2;\o)y\|_H^2 \\
    \le & \|S(s,s_1;\o)x - S(s,s_2;\o)y\|_H^2 - \l \int_{s}^t  \|S(r,s_1;\o)x-S(r,s_2;\o)y\|_H^2 \d r.
  \end{align*}
  Thus, by  Gronwall's Lemma
  \begin{align*}
    \|S(t,s_1;\o)x- S(t,s_2;\o)y\|_H^2
    &\le \|S(s_2,s_1;\o)x - y\|_H^2 e^{-\l (t-s_2)} \\
    &\le 2\left(\|S(s_2,s_1;\o)x\|_H^2 + \|y\|_H^2\right) e^{-\l (t-s_2)}.
  \end{align*}
  By \cite[Lemma 4.3.8]{PR07} $(H3)$, $(H4)$ and $(H2^\prime)$ imply that for each $\eta \in (0,\l)$ there exists a $C_\eta > 0$ such that for all $v \in V$
  \begin{equation}\label{eqn:coerc_by_mon}
    2\ _{V^*}\<A(v),v\>_V \le -\eta\|v\|^2_H + C_\eta.
  \end{equation}
  Let $\eta \in (0,\l)$ and $\tilde{\eta} = \frac{\eta + \l}{2} \in (\eta,\l)$. We use \eqref{eqn:coerc_by_mon} with $\tilde{\eta}$, $(H3)$, $(H4)$ and Young's inequality to obtain
  \begin{align*}
    &2 \ _{V^*}\< A(v+N_r), v \>_V \\
    =& 2 \ _{V^*}\< A(v+N_r), v+N_r-N_r \>_V \\
    \le & 2 \ve_1 \ _{V^*}\< A(v+N_r), v+N_r \>_V + 2 (1-\ve_1) \ _{V^*}\< A(v+N_r), v+N_r \>_V \\
      & + 2 \|A(v+N_r)\|_{V^*} \|N_r\|_V\\
   \le& \ve_1 K \|v+N_r\|_H^2 - \delta \ve_1 \|v+N_r\|_V^\a + \ve_1 C - \tilde{\eta} (1-\ve_1)\|v+N_r\|^2_H + (1-\ve_1)C_{\tilde{\eta}}  \\
      & + \ve_2 \|A(v+N_r)\|_{V^*}^{\frac{\a}{\a-1}} + C_{\ve_2} \|N_r\|_V^\a \\
    \le& (\ve_1 K - \tilde{\eta} (1-\ve_1)) \|v+N_r\|_H^2 + (\ve_2 C - \delta \ve_1) \|v+N_r\|_V^\a + \ve_1 C + (1-\ve_1)C_{\tilde{\eta}} \\
      & + \ve_2 C + C_{\ve_2} \|N_r\|_V^\a,
  \end{align*}
  where $\ve_1 \in [0,1]$, $\ve_2 > 0$ and  $C, C_{\ve_1}, C_{\ve_2} > 0$ are some constants.

Now by taking $0 < \ve_1 \le \frac{\tilde{\eta}-\eta}{\tilde{\eta} + K} \wedge 1$ and $ \ve_2 = \frac{\delta \ve_1}{C}$ we have
  \begin{align*}
    2 \ _{V^*}\< A(v+N_r), v \>_V
    &\le - \eta \|v+N_r\|_H^2 + \ve_1 C + (1-\ve_1)C_{\tilde{\eta}} + \ve_2 C + C_{\ve_2} \|N_r\|_V^\a \\
    &\le - \frac{\eta}{2} \|v\|_H^2 + \eta \|N_r\|_H^2 + \ve_1 C + (1-\ve_1)C_{\tilde{\eta}} + \ve_2 C + C_{\ve_2} \|N_r\|_V^\a \\
    &\le - \frac{\eta}{2} \|v\|_H^2 + C_\eta(r),
  \end{align*}
 where $C_\eta(r) = \eta \|N_r\|_H^2 + C_{\ve_2} \|N_r\|_V^\a+ \ve_1 C + (1-\ve_1)C_{\tilde{\eta}} + \ve_2 C$.

 Hence for all  $t_2 \ge t_1 \ge s$,
  \begin{align*}
    \|Z(t_2,s;\o)x\|_H^2
    &= \|Z(t_1,s;\o)x\|_H^2 + 2\int_{t_1}^{t_2} { }_{V^*}\<A(Z(r,s;\o)x + N_r(\o)),Z(r,s;\o)x\>_V \d r \\
    &\le \|Z(t_1,s;\o)x\|_H^2 - \frac{\eta}{2} \int_{t_1}^{t_2} \|Z(r,s;\o)x\|_H^2\d r
+ \int_{t_1}^{t_2} C_\eta(r) \d r.
  \end{align*}

  By Gronwall's Lemma
  \begin{align*}
    \|S(s_2,s_1;\o)x\|_H^2
    &\le 2 \left(\|Z(s_2,s_1;\o)x\|_H^2 + \|N_{s_2}(\o)\|_H^2 \right) \\
    &\le 2(\|x\|_H^2 e^{- \frac{\eta}{2} (s_2-s_1)} + \int_{s_1}^{s_2} e^{- \eta (s_2-r)} C_\eta(r) \d r  + \|N_{s_2}(\o)\|_H^2).
  \end{align*}

  For  $s_1 \le s_2 \le 0$ we conclude that
  \begin{align*}
    &\|S(t,s_1;\o)x- S(t,s_2;\o)y\|_H^2 \\
    &\hskip15pt \le  2\left(\|S(s_2,s_1;\o)x\|_H^2 + \|y\|_H^2\right) e^{-\l (t-s_2)} \\
    &\hskip15pt \le 4 \left( \|x\|_H^2 e^{- \frac{\eta}{2} (s_2-s_1)} + \int_{s_1}^{s_2} e^{- \frac{\eta}{2} (s_2-r)} C_\eta(r) \d r  + \|N_{s_2}(\o)\|_H^2 + \frac{1}{2} \|y\|_H^2 \right) e^{-\l (t-s_2)}\\
    &\hskip15pt \le 4 \left( \|x\|_H^2 e^{ \frac{\eta}{2} s_1 } e^{(\l - \frac{\eta}{2})s_2} + e^{(\l - \frac{\eta}{2})s_2} \int_{s_1}^{s_2} e^{\frac{\eta}{2}r} C_\eta(r) \d r  + e^{\l s_2}\|N_{s_2}(\o)\|_H^2 + \frac{e^{\l s_2}}{2} \|y\|_H^2 \right) e^{-\l t}\\
    &\hskip15pt \le 4 \left(e^{(\l - \frac{\eta}{2})s_2} \|x\|_H^2 + e^{(\l - \frac{\eta}{2})s_2} K_\eta + e^{\l s_2}\|N_{s_2}(\o)\|_H^2 + \frac{e^{\l s_2}}{2} \|y\|_H^2 \right) e^{-\l t}\\
    &\hskip15pt \rightarrow 0\ \text{ as }\ s_1, s_2 \rightarrow - \infty,
  \end{align*}
  where $ K_\eta = \int_{-\infty}^{0} e^{\frac{\eta}{2} r} C_\eta(r) \d r$ is finite by $(S4)$,
i.e. by the subexponential growth of $\|N_t\|_V$.

Therefore, for all $t \in \R$ and $\o \in \O$ there exists a limit $\eta_t(\o)$ (independent of $x$) such that
$$\lim\limits_{s \to -\infty} S(t,s;\o)x = \eta_t(\o)$$
 holds  uniformly  in $x$ on any balls (w.r.t.  $\|\cdot\|_H$).
\end{proof}
Now we can finish the proof of Theorem $\ref{thm:single_point}$.

\noindent\textbf{Proof of Theorem $\ref{thm:single_point}$} By Lemmas
\ref{lemma:contraction} and \ref{lemma:contraction2}  we may define
    \[  \mcA(\o) = \{\eta_0(\o)\}. \]
  We shall show that this defines a global random attractor for the RDS associated with (\ref{equation 6.1}).

Since $\eta_0(\o)$ is measurable, $\mcA(\o)$ is a random compact
set. Hence we only need to check the invariance and attraction properties for
$\mcA(\o)$.

The  continuity of $x \mapsto S(t,0;\o)x$ and the flow property imply that
  \begin{align*}
    \vp(t,\o)\mcA(\o)
      &= \left\{ S(t,0;\o) \lim_{s \rightarrow -\infty}S(0,s;\o)x \right\} = \left\{ \lim_{s \rightarrow -\infty} S(t,s;\o)x \right\} \\
      &= \left\{ \lim_{s \rightarrow -\infty} S(0,s-t,\t_t\o)x \right\} = \{ \eta_0(\t_t\o) \} = \mcA(\t_t\o),
\ t>0, x\in H.
  \end{align*}
 Since the convergence in Lemmas  \ref{lemma:contraction} is uniform (locally uniform resp. in Lemma \ref{lemma:contraction2})
  with respect to $x \in H$, for any bounded set $B \subseteq H$ we have
  \begin{align*}
      d(\vp(t,\t_{-t}\o)B,\mcA(\o))
          &= \sup_{x \in B} \|S(t,0,\t_{-t}\o)x - \eta_0(\o)\|_H \\
          &= \sup_{x \in B} \|S(0,-t;\o)x - \eta_0(\o)\|_H \rightarrow 0 (t\rightarrow \infty),
\end{align*}
i.e.  $\mcA(\o)$ attracts all deterministic bounded sets.

Therefore,  $\mcA$ is a global random attractor for the RDS associated with (\ref{equation 6.1}).

 We now deduce the  unique existence of an invariant random measure $\mu_\cdot \in \mcP_\O(H)$.
For the notion of an invariant random measure we refer to
\cite[Definition 4.1]{CF94}. By \cite[Corollary 4.4]{CF94} the existence of a random attractor implies the existence of an invariant random measure. Moreover, by \cite[Theorem 2.12]{c} every invariant measure for $\vp$ is supported by $\mcA=\{ \eta_0 \}$, i.e. $\mu_\o(\{ \eta_0(\o) \}) = 1$
for $\P$-a.a. $\omega$.

  The bounds on the speed of attraction follow immediately from the respective bounds in Lemma \ref{lemma:contraction} and \ref{lemma:contraction2}.
\qed


\section{Applications to concrete SPDE}\label{sec:applications}
In this section we present several examples of admissible random
perturbations $N_t$ and also show that $(H1)-(H5)$ and $(H2^\prime)$
can be verified for many concrete SPDE. Hence Theorems  \ref{T6.1}
and \ref{thm:single_point} can be applied to show the existence of
a random attractor for those examples.

We will first show that all c\`{a}dl\`{a}g processes with stationary increments satisfy $(S1)$-$(S3)$ and
 thus Theorems  \ref{thm:generation_rds}  and  \ref{thm:single_point} are applicable.
 Of course, this contains all L\'evy processes as well as fractional Brownian motion.

\begin{lem}\label{lem:gen_metric_dns}
  Let $(N_t)_{t \in \R}$ be a $V$-valued process with stationary increments and a.s.\ c\`{a}dl\`{a}g paths. Then there is a metric dynamical system $(\O, \mcF, \P, \t_t)$ and a version $\tilde{N}_t$ (cf. \cite[Definition 1.6]{RY99}) on $(\O, \mcF, \P, \t_t)$ such that $\tilde{N}_t$ satisfies $(S1)$-$(S3)$.
\end{lem}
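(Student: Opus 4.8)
The plan is to realize the process $(N_t)_{t\in\R}$ on its canonical path space and use the Wiener shift as the metric dynamical system. First I would let $\O$ be the Skorokhod space $D(\R;V)$ of c\`adl\`ag functions $\o: \R \to V$, equipped with the $\sigma$-algebra $\mcF$ generated by the coordinate (evaluation) maps, and define $\P$ to be the law of $(N_t - N_0)_{t\in\R}$ on $(\O,\mcF)$; we then set $\tilde N_t(\o) := \o(t)$, so that $\tilde N_0 \equiv 0$ and $\tilde N$ is a version of $N - N_0$. The shift is $\t_s(\o) = \o(\cdot + s) - \o(s)$, which maps $D(\R;V)$ into itself. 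One checks directly that $\t_0 = \mathrm{id}$ and $\t_{t+s} = \t_t\circ\t_s$: indeed
\[
 \t_t(\t_s\o)(r) = (\t_s\o)(r+t) - (\t_s\o)(t) = \big(\o(r+t+s)-\o(s)\big) - \big(\o(t+s)-\o(s)\big) = \o(r+t+s) - \o(t+s) = \t_{t+s}(\o)(r).
\]
That $\t_s$ is $\P$-preserving is exactly the statement that $(N_t - N_0)$ has stationary increments: for any finite set of times $r_1,\dots,r_k$ the vector $\big(\tilde N_{r_i}(\t_s\o)\big)_i = \big(\o(r_i+s)-\o(s)\big)_i$ has, under $\P$, the same law as $\big(N_{r_i+s}-N_s\big)_i = \big(N_{r_i}-N_0\big)_i$ in law, which is the law of $\big(\tilde N_{r_i}(\o)\big)_i$; since cylinder sets generate $\mcF$, this gives $\P\circ\t_s^{-1} = \P$. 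Measurability of $(s,\o)\mapsto\t_s(\o)$ with respect to $\mcB(\R)\otimes\mcF/\mcF$ follows from the joint measurability of $(s,\o)\mapsto\o(\cdot+s)-\o(s)$ on Skorokhod space (a standard fact, reducible to joint measurability of $(s,\o)\mapsto\o(r+s)$ for each fixed $r$, which holds by right-continuity).

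Next I would verify $(S1)$–$(S3)$ for $\tilde N$. Property $(S1)$ (strictly stationary increments, pathwise) is immediate from the definition: $\tilde N_t(\o) - \tilde N_s(\o) = \o(t)-\o(s)$, while $\tilde N_{t-s}(\t_s\o) - \tilde N_0(\t_s\o) = (\t_s\o)(t-s) - (\t_s\o)(0) = \big(\o(t)-\o(s)\big) - \big(\o(s)-\o(s)\big) = \o(t)-\o(s)$, so the two sides agree for \emph{every} $\o\in\O$. Property $(S3)$ (joint measurability of $(t,\o)\mapsto\tilde N_t(\o) = \o(t)$) again follows from right-continuity of c\`adl\`ag paths: approximate $\o(t)$ by $\o(\lceil nt\rceil/n)$, each of which is $\mcB(\R)\otimes\mcF/\mcB(V)$ measurable, and pass to the limit. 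For $(S2)$ (the local $L^\a(V)\cap L^2(H)$ regularity of $t\mapsto\tilde N_t(\o)$), note that a c\`adl\`ag path on a compact interval is bounded, hence lies in $L^p$ of that interval for every $p$; the only subtlety is that Definition of a version should be taken on the event where paths are c\`adl\`ag, which has full measure, and outside it one may redefine $\tilde N \equiv 0$. Since the original $N$ takes values in $V$ and $V\hookrightarrow H$ continuously, boundedness on compacts in $V$ gives boundedness on compacts in $H$ as well, so $(S2)$ holds on a set of full $\P$-measure; as elsewhere in the paper one can then discard the null set.

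The main obstacle — more a point requiring care than a genuine difficulty — is the measurability bookkeeping on Skorokhod space: ensuring that $D(\R;V)$ with the coordinate $\sigma$-algebra is a legitimate choice (one may prefer the Borel $\sigma$-algebra of the Skorokhod $J_1$ topology, which coincides with the coordinate $\sigma$-algebra when $V$ is Polish, a hypothesis that holds here since $V$ is a separable reflexive Banach space), and that the shift map and the evaluation map are jointly measurable in the appropriate product sense. All of this is standard (see e.g.\ the treatment of the canonical setup for processes with stationary increments in the references on random dynamical systems, \cite{CDF97,CF94}), so I would state it with brief justifications rather than full detail. With $(S1)$–$(S3)$ in hand, Theorem \ref{thm:generation_rds} applies to produce the RDS, and — adding $(S4)$, which for c\`adl\`ag processes with stationary increments is a mild pathwise growth condition — Theorem \ref{thm:single_point} applies as well, which is the assertion made immediately after the lemma in the text.
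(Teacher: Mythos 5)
Your proposal is correct and follows essentially the same route as the paper: the canonical realization on the Skorokhod space $D(\R;V)$ with the coordinate $\sigma$-algebra, the Wiener shift $\t_s(\o)=\o(\cdot+s)-\o(s)$, measure-preservation from stationarity of increments, and the pathwise verification of $(S1)$--$(S3)$ using right-continuity and local boundedness of c\`adl\`ag paths. The only (harmless) cosmetic difference is that you take $\P$ to be the law of $N-N_0$ rather than of $N$ itself.
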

\begin{proof}
  We choose $\O = D(\R; V)$ to be the set of all c\`{a}dl\`{a}g
  functions endowed with the Skorohod topology (cf. \cite{A98}, pp. 545),
$\mcF = \mcB(\O)$, $\t_t(\o)=\o(t+\cdot)-\o(t)$ and $\P = \mcL(N)$ to be the law of $N_t$
(or more precisely its restriction on $\O$). Note that $\mcF$ is the trace in $\O$ of
the product $\sigma$-algebra $\mcB(V)^\R$ and $(t,\o) \mapsto \t_t(\o)$ is measurable. Since $N_t$ has stationary increments we know that $\t_t\P = \P$. Hence $(\O,\mcF,\P,\t_t)$
defines a metric dynamical system and the coordinate process $\tilde{N}_t$ on $\O$ is a version of $N_t$ satisfying $(S1),(S2)$ and $(S3)$.
\end{proof}

We will prove the asymptotic bound $(S4)$ for two classes of
processes. The first class consists of all processes with independent
increments (e.g. L\'evy processes) where the proof is based on the
strong law of large numbers, and the second class consists of all processes
with H\"older continuous paths (e.g. fractional Brownian motion),
for which we use Kolmogorov's continuity theorem and the dichotomy
of linear growth for stationary processes.

\begin{lem}\label{len:levy}
  Let $V$ be a separable Banach space and $N_t$ be a $V$-valued L\'evy process with L\'evy characteristics
  $(m,R,\nu)$ (e.g. cf. \cite[Corollary 4.59]{PZ07}). Assume that $\int_V \left(\|x\|_V \vee \|x\|^2_V \right) d\nu(x) < \infty$, then we have
  $\P$-a.s.
    \[ \frac{N_t}{|t|} \rightarrow \oldpm \E N_1 \ (t \rightarrow \oldpm \infty).\]
\end{lem}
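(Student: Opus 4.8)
The plan is to establish the strong law of large numbers for the $V$-valued L\'evy process $N_t$ by reducing it to the classical strong law of large numbers for i.i.d.\ random variables together with a control of the fluctuations between integer times. First I would fix $t > 0$ and write, for $n = \lfloor t \rfloor$,
\[
  \frac{N_t}{t} = \frac{N_n}{n}\cdot\frac{n}{t} + \frac{N_t - N_n}{t}.
\]
The first term is handled by decomposing $N_n = \sum_{k=1}^{n} (N_k - N_{k-1})$ into a sum of i.i.d.\ $V$-valued random variables with common law $\mcL(N_1)$; the integrability hypothesis $\int_V (\|x\|_V \vee \|x\|_V^2)\,d\nu(x) < \infty$ guarantees via the L\'evy--Khintchine formula (e.g. \cite[Corollary 4.59]{PZ07}) that $\E \|N_1\|_V < \infty$, so that $\E N_1$ exists as a Bochner integral, and then the strong law of large numbers in separable Banach spaces (Mourier's theorem) gives $N_n/n \to \E N_1$ $\P$-a.s. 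Since $n/t \to 1$, the first term converges to $\E N_1$.

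The second step is to show that the remainder $(N_t - N_n)/t \to 0$ $\P$-a.s. Here I would set $M_n := \sup_{0 \le u \le 1} \|N_{n+u} - N_n\|_V$; by stationarity of increments the $M_n$ are identically distributed, and $M_0$ is a.s.\ finite because $N_\cdot$ has c\`adl\`ag (hence locally bounded) paths. It then suffices to check $\E M_0 < \infty$, for then $\sum_n \P(M_n > \varepsilon n) \le \sum_n \P(M_0 > \varepsilon n) < \infty$ and Borel--Cantelli yields $M_n/n \to 0$ a.s., which dominates $\|N_t - N_n\|_V/t$. The finiteness of $\E M_0$ follows from the integrability assumption: writing the L\'evy--It\^o decomposition $N_t = mt + W_t + \int \|x\|\le 1} x\,\tilde{N}(dt,dx) + \sum_{s \le t,\ \|\Delta N_s\|_V > 1} \Delta N_s$, the drift and Gaussian parts have integrable sup over $[0,1]$, the compensated small-jumps integral is an $L^2$-martingale whose running supremum is controlled by Doob's inequality (using $\int_{\|x\|\le 1}\|x\|_V^2\,d\nu < \infty$), and the large-jumps part over a unit interval has expected total variation $\le \int_{\|x\|_V > 1}\|x\|_V\,d\nu(x) < \infty$.

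Finally, for $t \to -\infty$ I would apply the same argument to the reversed process $\hat{N}_s := N_{-s}$, which is again a L\'evy process (with L\'evy measure $\hat\nu(dx) = \nu(-dx)$ and mean $\E\hat N_1 = -\E N_1$), obtaining $N_t/|t| = \hat N_{|t|}/|t| \to \E \hat N_1 = -\E N_1$, i.e.\ $N_t/|t| \to -\E N_1$ as $t \to -\infty$; combining the two cases gives $N_t/|t| \to \pm\,\E N_1$ as $t \to \pm\infty$. The main obstacle I anticipate is the second step: making the bound $\E M_0 < \infty$ rigorous requires the L\'evy--It\^o decomposition in the separable Banach space $V$ and a Doob-type maximal inequality for the compensated-jump martingale part, so one must be careful that the stated moment condition on $\nu$ is exactly what is needed to control both the small-jump ($L^2$) and large-jump ($L^1$) contributions; everything else is a routine reduction to Mourier's strong law and Borel--Cantelli.
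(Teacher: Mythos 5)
Your proposal is correct and follows essentially the same route as the paper: decompose $N_t$ at integer times into i.i.d.\ increments, apply the Banach-space strong law of large numbers, control $\sup_{u\in[0,1]}\|N_{n+u}-N_n\|_V$ over unit intervals, and reverse time for $t\to-\infty$. The only (immaterial) difference is in the remainder term, where the paper centers the process, observes that $\|N_t\|_V$ is then a c\`adl\`ag submartingale and applies Doob's maximal inequality followed by the SLLN for the i.i.d.\ suprema $S_n$, while you bound $\E M_0$ term by term in the L\'evy--It\^o decomposition and invoke Borel--Cantelli; both hinge on the same $L^2$/$L^1$ moment bounds that the hypothesis on $\nu$ provides.
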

\begin{proof}
  Since $\bar{N}_t := N_{-t}$  is also a L\'evy process satisfying the assumptions and $\E \bar{N}_1 = - \E N_1$,
  it is sufficient to prove the assertion for $t \to +\infty$.
  By the L\'evy-It\^o decomposition for Banach space-valued L\'evy processes (cf. \cite[Theorem 4.1]{A07}) we have
    \[ N_t = mt + W_t + \int_{B_1(0)} x \tilde{N}(t,dx) + \int_{B_1^c(0)} x N(t,dx), \]
  where $m \in V$, $W_t$ is a $V$-valued Wiener process and
  \begin{align}\label{eqn:comp_part}
    \int_{B_1(0)} x \tilde{N}(t,dx)
    := \lim_{n \to \infty } \int_{\{ \frac{1}{n+1} \le ||x||_V < 1 \}} x \tilde{N}(t,dx)
    = \lim_{n \to \infty} \sum_{k=1}^n \int_{\{ \frac{1}{k+1} \le ||x||_V < \frac{1}{k} \}} x \tilde{N}(t,dx) 
  \end{align}
  is a $\P$-$a.s.$ limit of compensated compound Poisson processes.

By an analogous calculation to \cite[pp. 49]{PZ07} we
  have
  \begin{align*}
     \E \| \int_{\{ \epsilon \le \|x\|_V < 1 \}} x \tilde{N}(t,dx) \|_V^2
     &\le 2 t \int_{\{ \epsilon \le \|x\|_V < 1 \}} \|x\|_V^2 d\nu(x) + 4 \left( t  \int_{\{ \epsilon \le \|x\|_V < 1 \}} \|x\|_V d\nu(x)
     \right)^2,
  \end{align*}
  and
  \begin{align*}
     \E \| \int_{B_1^c(0)} x N(t,dx) \|_V^2 &\le t \int_{B_1^c(0)} \|x\|_V^2 d\nu(x) + \left( t \int_{B_1^c(0)} \|x\|_V d\nu(x) \right)^2.
  \end{align*}
  Thus,
  \begin{align}\label{eqn:comp_bound}
     & \sup_{n} \E \| \sum_{k=1}^n \int\limits_{\{ \frac{1}{k+1} \le \|x\|_V < \frac{1}{k} \}} x \tilde{N}(t,dx)
     \|^2 = \sup_{n} \E \| \int\limits_{\{ \frac{1}{n+1} \le \|x\|_V < 1 \}} x \tilde{N}(t,dx) \|^2 \\
     \le& 2t \int_{B_1(0)} \|x\|_V^2 d\nu(x) + 4 \left( t  \int_{B_1(0)} \|x\|_V d\nu(x) \right)^2 < \infty.\nonumber
  \end{align}
  By \eqref{eqn:comp_part} $\int_{B_1(0)} x \tilde{N}(t,dx)$ is the limit of a $\P$-$a.s.$
  converging series of independent random variables and by \cite[Theorem 3.4.2]{L86}
  the bound \eqref{eqn:comp_bound} implies that the convergence in \eqref{eqn:comp_part} also holds in $L^2(\O;V)$. Hence $N_t \in L^2(\O;V)$ and
    \[ \E N_t = t \left(m +  \int_{B_1^c(0)} x d\nu(x) \right) = t \E N_1. \]
 Let now $N_t$ be centered (i.e. $\E N_t = 0$) and note
   \[ N_n = N_n - N_{n-1} + N_{n-1} - N_{n-2} + ... + N_1 ,\]
  then by the law of large numbers for Banach space-valued random vectors (cf. \cite[Theorem III.1.1]{H77})
we have  $\P$-$a.s.$
   \[ \frac{N_n}{n} \to \E[N_1] = 0\   (n\rightarrow\infty) . \]
  It remains to derive the  bound for $N_t - N_{[t]}$. Let
  $$S_n := \sup_{s \in [0,1]} ||N_{n+s} - N_n||_V.$$
   Since $N_t$ is centered and has first moment, it is a martingale. Thus $\|N_t\|_V$ is a non-negative c\`{a}dl\`{a}g submartingale
   and  Doob's maximal inequality implies that
    \[ \E S_0 = \E \sup_{s \in [0,1]} ||N_s\|_V \le 2 (\E \|N_1\|_V^2)^\frac{1}{2} < \infty.\]
  Since  $S_n$ are i.i.d., by the strong law of large numbers we have for $N\rightarrow\infty$
    \[ \frac{\sum_{n=1}^{N} S_n}{N} \to \E[S_1],\ \P\text{-a.s.} . \]
  In particular, we have $\frac{S_N}{N} \to 0$, $\P$-$a.s.$. Consequently,
   \begin{align*}
      \frac{\|N_t\|_V}{t}&
      \le \frac{[t]}{t} \left( \frac{\|N_t - N_{[t]}\|_V}{[t]} +  \frac{\|N_{[t]}\|_V}{[t]}
      \right)\\
      &\le \frac{[t]}{t} \left( \frac{S_{[t]}}{[t]} + \frac{\|N_{[t]}\|_V}{[t]} \right) \to 0\   (t\to \infty),\ \P\text{-a.s.}\ .
   \end{align*}

   For $N_t$ not necessarily centered we have
   \begin{align*}
     \frac{N_t}{t}
     = \frac{N_t-\E N_t}{t} + \frac{\E N_t}{t}
     = \frac{N_t-\E N_t}{t} + \E N_1 \to \E N_1,\ (t\to \infty),\ \P\text{-a.s.}\ .
   \end{align*}
\end{proof}

We now prove an asymptotic bound for processes satisfying the assumptions of Kolmogorov's continuity theorem.
The proof is similar to \cite[Lemmas 2.4 and 2.6]{MS04} where the case of fractional Brownian motion
with Hurst parameter $H\in(\frac{1}{2},1)$ is considered.
 However, note that we do not require $\g = 2$ in (\ref{eqn:kol_assumpt}), hence here we can include
fractional Brownian motion
with any Hurst parameter $H\in(0,1)$ (see Lemma \ref{lem:fBM}).

\begin{lem}\label{lem:growth_bd_1}
  Let $(N_t)_{t \in \R}$ be a process on a metric dynamical system $(\O,\F,\P,\t_t)$ with values in a Banach space $V$ such that $(S1)$ holds.
  Assume that there exist constants $\gamma > 1$, $\a > 0$ and $C \in \R$ such that
  \begin{equation}\label{eqn:kol_assumpt}
    \E \|N_t-N_s\|_V^\g \le C |t-s|^{1+\a},\ \forall t,s \in \R.
  \end{equation}
  Then there exists a $\t_t$-invariant set $\O_0 \subseteq \O$ with $\P(\O_0) = 1$ and for any
   $\epsilon > 0$, $\o \in \O_0$, $0 < \b < \frac{\a}{\g}$ and any interval
    $[s_0,t_0] \subseteq \R$ there exist constants $C_1 = C_1(\epsilon,\o,\b), C_2=C_2(\o,\b,s_0,t_0)>0$
    such that
    \begin{align*}
       &\|N_t(\o)\|_V \le \epsilon |t|^2 + C_1,\ \forall t\in\R  \text{ and} \\
       &\|N_\cdot(\o)\|_{C^\b([s_0,t_0];V)} \le C_2.
    \end{align*}
    In particular, $N_t$ satisfies $(S4)$.
\end{lem}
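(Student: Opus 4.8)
The plan is to combine Kolmogorov's continuity theorem with a standard dichotomy argument for stationary processes. First I would apply Kolmogorov's continuity theorem (see, e.g., \cite{RY99}) to the hypothesis \eqref{eqn:kol_assumpt}: for every $\b\in(0,\a/\g)$ the process $N_t$ has a modification whose paths are locally $\b$-H\"older continuous, and moreover for each interval $[s_0,t_0]$ the H\"older seminorm $\|N_\cdot\|_{C^\b([s_0,t_0];V)}$ is finite $\P$-a.s., giving the second displayed bound with $C_2=C_2(\o,\b,s_0,t_0)$. (One checks that the modification can be taken to agree with $N_t$ on the metric dynamical system, so that $(S1)$ is preserved; this is routine.)

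The main work is the first bound, $\|N_t(\o)\|_V \le \epsilon|t|^2 + C_1$. I would reduce this to controlling the growth of the "one-step" quantity
\[ a_n(\o) := \sup_{s\in[0,1]} \|N_{n+s}(\o)-N_n(\o)\|_V, \qquad n\in\Z. \]
By $(S1)$, $a_n(\o) = \sup_{s\in[0,1]}\|N_{s}(\t_n\o)-N_0(\t_n\o)\|_V = a_0(\t_n\o)$, so $(a_n)_{n\in\Z}$ is a stationary sequence. From the local H\"older bound on $[0,1]$ together with \eqref{eqn:kol_assumpt} (for instance via the standard Garsia–Rodemich–Rumsey estimate, or simply $\E\sup_{s\in[0,1]}\|N_s-N_0\|_V^\g \le C$) one gets $\E a_0 < \infty$. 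Now invoke the dichotomy of linear growth for stationary sequences with finite mean: $\P$-a.s. $a_n/|n| \to 0$ as $|n|\to\infty$, and in fact $\sum_{|k|\le |n|} a_k / |n|$ converges, hence $a_n = o(|n|)$. This yields a $\t_t$-invariant full-measure set $\O_0$ on which $a_n(\o) \le \tfrac{\epsilon}{2}|n| + c(\o)$ eventually, and then for any $t$ with $[t]=n$,
\[ \|N_t(\o)\|_V \le \|N_{[t]}(\o)\|_V + a_{[t]}(\o) \le \sum_{k=0}^{[t]-1} a_k(\o) + a_{[t]}(\o) \quad(\text{for }t\ge 0), \]
and the partial sums of a sequence that is $o(n)$ are $o(n^2)$, in fact $\le \tfrac{\epsilon}{2}n^2 + \tilde c(\o)$ for $n$ large; absorbing the finitely many small-$|t|$ values into a constant $C_1=C_1(\epsilon,\o,\b)$ gives $\|N_t(\o)\|_V \le \epsilon|t|^2 + C_1$. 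The case $t\to-\infty$ is symmetric using the $a_n$ with negative index. Finally, since $\epsilon>0$ is arbitrary, $\|N_t(\o)\|_V = o(e^{\l|t|})$ for every $\l>0$, which is exactly $(S4)$; invariance of $\O_0$ under $\t_t$ follows because $a_n(\t_1\o)=a_{n+1}(\o)$, so the a.s. asymptotic statement is shift-invariant.

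The step I expect to be the main obstacle is making the passage from the moment bound \eqref{eqn:kol_assumpt} to $\E a_0 < \infty$ fully rigorous in the Banach-space-valued setting — i.e. getting a uniform-in-$s$ modulus of continuity on $[0,1]$ with integrable supremum — since Kolmogorov's theorem as usually stated gives path regularity but one must be a little careful to extract $\E\sup_{s\in[0,1]}\|N_s-N_0\|_V^{p} < \infty$ for some $p\ge 1$; the Garsia–Rodemich–Rumsey inequality handles this cleanly, and everything else (the dichotomy, the summation-by-parts growth estimate, the shift-invariance bookkeeping) is routine.
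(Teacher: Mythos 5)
Your proposal is correct and follows essentially the same route as the paper: Kolmogorov's continuity theorem for the local H\"older bound, a telescoping decomposition of $N_t$ over unit intervals combined with $(S1)$, and the dichotomy of (sub)linear growth for stationary sequences applied to the one-step increment, followed by summation to get the $\epsilon|t|^2$ bound. The only cosmetic difference is that the paper controls the shifted H\"older seminorms $\|N_\cdot(\t_k\o)\|_{C^\b([0,1];V)}$ (whose integrability it takes directly from Kunita's form of Kolmogorov's theorem) rather than your $a_n=\sup_{s\in[0,1]}\|N_{n+s}-N_n\|_V$, which addresses the integrability concern you flagged.
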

\begin{proof}
  Since $\bar{N}_t := N_{-t}$ also satisfies the assumptions,
  it is enough to prove the assertion for $t > 0$.
  Firstly, we have
  \begin{align}\label{eqn:N-bound}
    \|N_t(\o)\|_V
    &= \|N_t(\o)-N_{[t]}(\o)+ N_{[t]}(\o) - N_{[t]-1}(\o) + ... + N_1(\o)-N_0(\o)+N_0(\o)\|_V  \nonumber\\
    &= \|N_{t-[t]}(\t_{[t]}\o)-N_0(\t_{[t]}\o) + N_1(\t_{[t]-1}\o) - N_0(\t_{[t]-1}\o) + ... \nonumber\\
      &\hskip15pt + N_1(\o)-N_0(\o)+N_0(\o) \|_V \\
    &\le \|N_\cdot(\t_{[t]}\o)\|_{C^\b([0,1];V)} + \|N_\cdot(\t_{[t]-1}\o)\|_{C^\b([0,1];V)} + ... +\|N_\cdot(\o)\|_{C^\b([0,1];V)} \nonumber\\
      &\hskip15pt +\|N_0(\o)\|_V,\ t\in\R_+. \nonumber
  \end{align}
  Hence we need to derive a bound for $\|N_\cdot(\t_r\o)\|_{C^\b([0,1];V)}$ as a function of $r$.

  Using Kolmogorov's continuity theorem (cf. \cite[Theorem 1.4.1]{K90}) and \eqref{eqn:kol_assumpt} we obtain
    \[ \|N_\cdot(\o)\|_{C^\b([s,t];V)} \le K(\o,\b,s,t) \in L^\gamma(\O,\mcF^\P,\P) \subseteq L^1(\O,\mcF^\P,\P),\ \forall\ s<t, \]
  where $\mcF^\P$ is the completion of $\mcF$ with respect to $\P$. Note that
  \begin{align}\label{eqn:hoelder_bd}
    \|N_\cdot(\t_{r}\o)\|_{C^\b([s,t];V)}
     &= \sup_{u \ne v, u,v \in [s,t]} \frac{\|N_u(\t_{r}\o)-N_v(\t_{r}\o)\|_V}{|u-v|^\b} \\
     &= \sup_{u \ne v, u,v \in [s,t]} \frac{\|N_{u+r}(\o)-N_{v+r}(\o)\|_V}{|u-v|^\b}\nonumber \\
     & = \|N_\cdot(\o)\|_{C^\b([s+r,t+r];V)}, \nonumber
  \end{align}
  for all $s < t$ and $r \in \R$. Hence
    \[ \sup_{r \in [0,1]} \|N_\cdot(\t_r\o)\|_{C^\b([s,t];V)} \le \|N_\cdot(\o)\|_{C^\b([s,t+1];V)} \in L^1(\O),\ \forall\ s<t.\]
  The dichotomy of linear growth for stationary processes (cf. \cite[Proposition 4.1.3 (ii)]{A98}) states that any measurable map $f: \O \to \R$ on a metric dynamical system $(\O,\mcF,\P,(\t_t)_{t \in \R})$ with $\sup_{t \in [0,1]} f^+(\t_t \cdot) \in L^1(\O)$ grows sublinearly, i.e.
    \[ \limsup_{t \to \oldpm \infty} \frac{1}{|t|}f(\t_t \o) = 0, \]
  on an invariant set of full $\P$ measure. We conclude that there is a $\t_t$-invariant set $\O_0 \subseteq \O$ with $\P(\O_0) = 1$ such that
    \[ \lim_{|t| \to \infty} \frac{1}{|t|} \|N_\cdot(\t_t\o)\|_{C^\b([0,1];V)} = 0,\ \forall\ \o \in \O_0.\]
   Hence for every $\epsilon > 0$, $\o \in \O_0$ there exists a constant $T:=T(\epsilon, \o) \in \N$ such that
   $$\|N_\cdot(\t_t\o)\|_{C^\b([0,1];V)} \le \epsilon |t|,\  |t| \ge T.$$
    By \eqref{eqn:N-bound} this implies
  \begin{align*}
     \|N_t(\o)\|_V
      &\le \sum_{k=T}^{[t]} \|N_\cdot(\t_{k}\o)\|_{C^\b([0,1];V)} + \sum_{k=0}^{T-1} \|N_\cdot(\t_{k}\o)\|_{C^\b([0,1];V)} +\|N_0(\o)\|_V  \\
      &\le \epsilon [t]^2 + T \|N_\cdot(\o)\|_{C^\b([0,T];V)} +\|N_0(\o)\|_V  \\
      &\le \epsilon [t]^2 + T K(\o,\b,0,T) +\|N_0(\o)\|_V, \ t \in
      \R_+.
  \end{align*}

\end{proof}

\begin{cor}\label{cor1}
  Let $(N_t)_{t \in \R}$ be a $V$-valued process with stationary increments and a.s.\ c\`{a}dl\`{a}g paths. Assume that \eqref{eqn:kol_assumpt} or the assumptions of Lemma \ref{len:levy} hold, then there is a metric dynamical system $(\O, \mcF, \P, \t_t)$ and a version $\tilde{N}_t$ on $(\O, \mcF, \P, \t_t)$ such that $\tilde{N}_t$ satisfies $(S1)$-$(S4)$.
\end{cor}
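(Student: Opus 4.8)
The plan is to combine the two earlier results into one statement by simply checking that the hypotheses of each feed into the conclusions we want. By Lemma \ref{lem:gen_metric_dns}, any $V$-valued process with stationary increments and a.s.\ c\`{a}dl\`{a}g paths admits a metric dynamical system $(\O,\mcF,\P,\t_t)$, built as the Skorohod path space with the Wiener shift, together with a version $\tilde N_t$ (the coordinate process) satisfying $(S1)$--$(S3)$. So the entire task reduces to verifying that this particular version $\tilde N_t$ also satisfies $(S4)$, the subexponential growth bound, under either of the two additional hypotheses.

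First I would invoke Lemma \ref{lem:gen_metric_dns} to fix the metric dynamical system and the version $\tilde N_t$, and record that $(S1)$--$(S3)$ hold. Then I split into the two cases. \textbf{Case 1:} If \eqref{eqn:kol_assumpt} holds for $N_t$, then since $\tilde N_t$ has the same finite-dimensional distributions as $N_t$, the moment bound \eqref{eqn:kol_assumpt} transfers verbatim to $\tilde N_t$; moreover $\tilde N_t$ satisfies $(S1)$ on its metric dynamical system. Hence Lemma \ref{lem:growth_bd_1} applies directly to $\tilde N_t$ and yields the bound $\|\tilde N_t(\o)\|_V \le \epsilon|t|^2 + C_1$ on a $\t_t$-invariant set of full measure, which in particular gives $(S4)$ (sub-polynomial growth is certainly $o(e^{\lambda|t|})$ for every $\lambda>0$). \textbf{Case 2:} If the assumptions of Lemma \ref{len:levy} hold, then $N_t$ is a L\'evy process, so it has both stationary and independent increments and c\`{a}dl\`{a}g paths; the coordinate version $\tilde N_t$ on the path space is again a L\'evy process with the same characteristics $(m,R,\nu)$, so the integrability hypothesis on $\nu$ is inherited. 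Lemma \ref{len:levy} then gives $\tilde N_t/|t| \to \oldpm \E \tilde N_1$ $\P$-a.s.\ as $t\to\oldpm\infty$, so $\|\tilde N_t(\o)\|_V = O(|t|)$ along the relevant sequence and in fact $\|\tilde N_t(\o)\|_V/|t|$ is bounded $\P$-a.s.; this is linear growth, hence certainly subexponential, so $(S4)$ holds. In either case one restricts to the $\t_t$-invariant full-measure set on which the bound holds.

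The only slightly delicate point is measurability/version bookkeeping: one must make sure that passing to the coordinate-process version on the Skorohod space does not disturb the moment bound \eqref{eqn:kol_assumpt} (it does not, being distributional) nor the L\'evy structure (it does not, since the law is preserved), and that the invariant set produced by Lemma \ref{lem:growth_bd_1} or by the a.s.\ convergence in Lemma \ref{len:levy} can be taken $\t_t$-invariant of full measure, which both lemmas already provide. I do not expect any genuine obstacle here; the corollary is essentially a packaging of Lemmas \ref{lem:gen_metric_dns}, \ref{len:levy} and \ref{lem:growth_bd_1}, and the "main obstacle" is merely to state cleanly that the hypotheses of the latter two lemmas are stable under taking the canonical version provided by the first.
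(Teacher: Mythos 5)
Your proposal is correct and is exactly the argument the paper intends: the corollary is stated without proof precisely because it is the immediate combination of Lemma \ref{lem:gen_metric_dns} (giving the canonical version with $(S1)$--$(S3)$) with Lemma \ref{lem:growth_bd_1} or Lemma \ref{len:levy} (each yielding at most polynomial, hence subexponential, growth, i.e.\ $(S4)$). Your bookkeeping remarks about the hypotheses being distributional and thus stable under passing to the coordinate-process version are the right (and only) points to check, and note that $(S4)$ only asks for a full-measure set, so $\t_t$-invariance is not even needed in the L\'evy case.
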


Now we show that (\ref{eqn:kol_assumpt}) holds for fractional
Brownian Motion (fBM) with any Hurst parameter. We first recall the definition of Banach
space-valued fBM.
\begin{defn}[Fractional Brownian motion]
  Let $H \in (0,1)$ and $R: V^* \to V$ be a bounded linear and non-negative  symmetric operator.
A $V$-valued $\P$-a.s.\ continuous centered Gaussian process $B_t^H$ starting at $0$ is called an $R$-fBM with Hurst parameter $H$ if the covariance is given by
         \[ \E[ \ _{V^*}\<x,B_t^H\>_V \ _{V^*}\<y,B_s^H\>_V ]= \frac{1}{2}(t^{2H}+s^{2H}-|t-s|^{2H}) \ _{V^*}\<x,Ry\>_V\]
  for all $x,y \in V^*$, $t,s \in \R_+$.
\end{defn}

It is easy to see that $B_t^H$ has stationary increments. Thus, according to Lemma \ref{lem:gen_metric_dns} we will always consider
the canonical realization of fBM in this paper.

Let
  \[V \subseteq H \subseteq V^*\]
be a Gelfand triple, $B_t^H$ be an $R$-fBM in $H$ and $\l_k \ge 0$, $e_k \in H$ such that $Re_k = \l_k e_k$. Then $B_t^H$ has the representation
\begin{equation}\label{eqn:fBM-rep}
  B_t^H = \sum_{k=1}^\infty \sqrt{\l_k} \b_k^H(t) e_k,
\end{equation}
where $\b_k^H$ are independent real-valued fBM and the convergence holds $\P$-a.s.\ as well as in each $L^p(\O;H)$.

\begin{lem}\label{lem:fBM}
  Let $B_t^H$ be a fBM in $H$ with representation \eqref{eqn:fBM-rep} and assume that $K = \sum_{k=1}^\infty \sqrt{\l_k} \|e_k\|_V < \infty$.
  Then $B_t^H$ satisfies (\ref{eqn:kol_assumpt}),  more precisely,
     for each $m \in \N$ there is a constant $C > 0$ such that
  \[ \E\ \|B_t^H - B_s^H\|_V^{2m} \le CK |t-s|^{2Hm}, \ s,t\in\R .\]
\end{lem}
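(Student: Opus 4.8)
The plan is to estimate $\E\|B_t^H-B_s^H\|_V^{2m}$ by using the series representation \eqref{eqn:fBM-rep} together with the fact that the increment $B_t^H-B_s^H$ is a centered $V$-valued Gaussian random variable whose one-dimensional marginals have an explicit variance. First I would observe that, writing $B_t^H-B_s^H = \sum_{k=1}^\infty \sqrt{\l_k}\,(\b_k^H(t)-\b_k^H(s))\,e_k$, the triangle inequality in $V$ gives the pointwise bound
\[
\|B_t^H-B_s^H\|_V \le \sum_{k=1}^\infty \sqrt{\l_k}\,|\b_k^H(t)-\b_k^H(s)|\,\|e_k\|_V.
\]
This reduces the problem to controlling the $2m$-th moment of a weighted sum of absolute values of the real-valued fractional Brownian increments $\b_k^H(t)-\b_k^H(s)$, each of which is a centered Gaussian with variance exactly $|t-s|^{2H}$.

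Next I would take $2m$-th moments of the displayed inequality. The cleanest way is to normalise: set $p_k := \sqrt{\l_k}\,\|e_k\|_V / K$, so that $\sum_k p_k = 1$ by the hypothesis $K=\sum_k \sqrt{\l_k}\|e_k\|_V<\infty$, and write the right-hand side as $K \sum_k p_k\,|\b_k^H(t)-\b_k^H(s)|$. By Jensen's inequality applied to the probability weights $(p_k)$ and the convex function $x\mapsto x^{2m}$,
\[
\Big(\sum_k p_k\,|\b_k^H(t)-\b_k^H(s)|\Big)^{2m} \le \sum_k p_k\,|\b_k^H(t)-\b_k^H(s)|^{2m}.
\]
Taking expectations and using that $\E|\b_k^H(t)-\b_k^H(s)|^{2m} = c_m\,|t-s|^{2Hm}$, where $c_m=(2m-1)!!$ is the $2m$-th absolute moment of a standard Gaussian, yields
\[
\E\|B_t^H-B_s^H\|_V^{2m} \le K^{2m}\,c_m\,|t-s|^{2Hm}\sum_k p_k = c_m K^{2m}\,|t-s|^{2Hm}.
\]
Relabelling the constant gives the claim with $\gamma=2m$, $1+\alpha = 2Hm$; since $H>0$ one can always choose $m$ large enough that $2Hm>1$, so \eqref{eqn:kol_assumpt} is genuinely satisfied with $\gamma>1$, which is what Lemma \ref{lem:growth_bd_1} needs. (One should note the stated form of the lemma has the constant $CK$ rather than $CK^{2m}$; absorbing powers of $K$ into $C$ is harmless, or one simply keeps $K^{2m}$ and renames.)

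The only genuine subtlety — and the step I would be most careful about — is justifying the interchange of expectation with the infinite sum, i.e. making the formal triangle-inequality bound rigorous in $L^{2m}(\O)$. Here I would invoke that the convergence in \eqref{eqn:fBM-rep} holds in $L^p(\O;H)$ for every $p$, and more directly that the partial sums $\sum_{k\le n}\sqrt{\l_k}(\b_k^H(t)-\b_k^H(s))e_k$ converge in $L^{2m}(\O;V)$: for the partial sums the triangle inequality and the computation above give a uniform-in-$n$ bound $c_m K^{2m}|t-s|^{2Hm}$ on their $L^{2m}(\O;V)$-norms, and Fatou's lemma (applied along the a.s.\ convergent subsequence guaranteed by the $\P$-a.s.\ convergence in \eqref{eqn:fBM-rep}) then transfers the bound to the limit $B_t^H-B_s^H$. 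This avoids having to discuss convergence of the infinite sum inside the norm directly and makes the estimate clean. Everything else is the routine Gaussian moment identity and bookkeeping of constants.
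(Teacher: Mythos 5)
Your proof is correct, but it takes a genuinely different route from the paper's. The paper first invokes the comparability of Gaussian moments (Ledoux--Talagrand, Corollary 3.2) to reduce the $2m$-th moment to the first moment, $\bigl[\E\|B_t^H-B_s^H\|_V^{2m}\bigr]^{1/2m}\le C\,\E\|B_{t-s}^H\|_V$, and then bounds the first moment by the triangle inequality term by term, using $L^1(\O;V)$-convergence of the series; the desired estimate follows by raising to the power $2m$. You instead avoid the Gaussian moment-comparison theorem entirely: after the same triangle-inequality bound you normalise the weights $\sqrt{\l_k}\|e_k\|_V$ into a probability vector and apply Jensen's inequality to $x\mapsto x^{2m}$, which reduces everything to the explicit $2m$-th moments of the scalar increments $\b_k^H(t)-\b_k^H(s)$. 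Your argument is more elementary (only Jensen, monotone convergence and the scalar Gaussian moment identity), at the cost of a slightly longer computation; the paper's is shorter once one accepts the Banach-space Gaussian hypercontractivity input. Your care about interchanging the expectation with the infinite sum is appropriate: the a.s.\ convergence in \eqref{eqn:fBM-rep} is stated in $H$, not $V$, so one should indeed pass through the $L^{2m}(\O;V)$ (or $L^1(\O;V)$) Cauchy property of the partial sums, as you do, and extract an a.s.\ convergent subsequence in $V$ before applying Fatou. Your remark about the constant ($CK$ versus $c_mK^{2m}$) also applies verbatim to the paper's own statement and is immaterial.
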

\begin{proof}
  By the comparability of Gaussian moments (cf. \cite[Corollary 3.2]{LT91}) we have
    \[ \left[\E \|B_t^H - B_s^H\|_V^{2m} \right]^{\frac{1}{2m}}
    = \left[\E \|B_{t-s}^H\|_V^{2m} \right]^{\frac{1}{2m}} \le C \E \|B_{t-s}^H\|_V,\]
  where $C > 0$ is a constant depending only on $m$.

  By our assumption we know that the convergence in \eqref{eqn:fBM-rep} also holds in $L^1(\O;V)$.
  Hence we have
  \begin{align*}
   \left[ \E \|B_t^H - B_s^H\|_V^{2m} \right]^\frac{1}{2m}
    &\le C \E \|B_{t-s}^H\|_V \\
     &= C \lim_{N \to \infty} \E \|\sum_{k=1}^N \sqrt{\l_k} \b_k^H(t-s) e_k\|_V  \\
    &\le C \lim_{N \to \infty} \sum_{k=1}^N \sqrt{\l_k} \|e_k\|_V \E |\b_k^H(t-s)| \\
    & \le CK |t-s|^{H}, \ s,t\in\R.
  \end{align*}
 In particular,  choosing $m$ such that $2Hm > 1$ we get \eqref{eqn:kol_assumpt}.
\end{proof}

We now proceed to examples of SPDE satisfying $(H1)-(H5)$ and $(H2^\prime)$.
 Note that most of those assumptions are well known and have been
 used extensively in recent years for investigating SPDE within the variational framework, e.g. see $(H1)-(H4)$ in \cite{GM07,GM09,L09,L10-2,L10,PR07} and $(H2^\prime)$ in \cite{DRRW06,L09,L10}. It has  also been proved that (\ref{condition 1}) in $(H5)$ holds for many SPDE in \cite{L10-2}. Hence, we only need to verify (\ref{noise condition}) in $(H5)$.

The following elementary lemma is crucial for verifying $(H2^\prime)$ (cf. \cite{L09,L10}).
 For the proof see e.g. \cite{L10}.
\beg{lem}\label{lem5} Let $(E, \langle\cdot,\cdot\rangle)$ be a
 Hilbert space and $\|\cdot\|$ denote its norm.  Then
for any $r\geq0$ we have \beq\label{3.1} \langle\|a\|^ra-\|b\|^rb,
a-b\rangle\geq2^{-r}\|a-b\|^{r+2}, \ a,b\in E. \end{equation}
\end{lem}

\beg{exa} Let $\Lambda$ be an open bounded domain in $\mathbb{R}^d$
 and $L^p:=L^p(\Lambda)$ for some fixed  $p\geq 2$.
 Consider
the following triple
$$V:=L^{p}\subseteq  H:= L^2\subseteq (L^{p})^*\equiv L^{\frac{p}{p-1}}$$
and the stochastic equation
 \beq\label{e6.6}
 \d X_t=f(X_t) \d t+ \d N_t,\ t\in \mathbb{R},
  \end{equation}
where $N_t$ is an $L^p$-valued process with stationary increments and a.s.\ c\`{a}dl\`{a}g paths,
$f:\mathbb{R}^d\rightarrow \mathbb{R}^d$ is continuous and satisfies the following conditions:
\begin{align}\label{coercive}
&\<f(x)-f(y), x-y \>\le -\l |x-y|^\beta;\nonumber \\
& \<f(x), x \>\le -\delta|x|^p+K|x|^2+C;\\
& |f(x)|\le C(|x|^{p-1}+1),\ x,y\in \mathbb{R}^d,\nonumber
\end{align}
where $C,\l>0,\delta>0,\beta>2$ are some constants and $\<\cdot,\cdot\>$ is the inner product on $\mathbb{R}^d$.
Then
the RDS generated by $(\ref{e6.6})$ has a unique random fixed point and the other assertions in Theorem \ref{thm:single_point} also hold.
 If $\beta=2$ in (\ref{coercive}), then the conclusions still hold,
provided $N_t$ also satisfies $(S4)$.
 \end{exa}

\beg{proof} Using a similar argument as in \cite[Example 4.1.5]{PR07}, one can show that $(H1)$,$(H2^\prime)$,
$(H3)$ and $(H4)$ hold for (\ref{e6.6}) with $\alpha=p$.
 Hence Theorem \ref{thm:single_point} applies.
\end{proof}

\begin{rem} (i) A typical example for $f$ is as follows (cf. \cite{L10-2,L10,PR07}):
 $$f(x)=-|x|^{p-2}x+ \eta x,\ \eta\le 0.$$

(ii) The first inequality in (\ref{coercive}) implies that
$$ \<f(x), x \>\le -\frac{\lambda}{2}  |x|^\beta +C.$$
Therefore, if $\beta\ge p$, then the second inequality (so-called coercivity condition) in (\ref{coercive}) automatically holds.

(iii) If $N_t$ is a finite-dimensional fBM,  the existence of a random fixed point for (\ref{e6.6})
  has also been studied in \cite{GKN09}.
Compared with the  result in \cite{GKN09}, we only require a coercivity condition (the second inequality in  (\ref{coercive}))
on $f$ instead of
assuming $f$ to be continuously differentiable as in \cite{GKN09}.
Another improvement is that we can allow equation (\ref{e6.6}) to be driven by
infinitely many fractional Brownian motions or by L\'{e}vy noise.
\end{rem}

\beg{exa} (Stochastic reaction-diffusion equation)\label{exa:srde}\\
Let $\Lambda$ be an open bounded domain in $\mathbb{R}^d$. We consider
the following triple
$$ V:=W^{1,2}_0(\Lambda) \subseteq L^2(\Lambda)\subseteq (W^{1,2}_0(\Lambda))^*$$
and the stochastic reaction-diffusion equation
 \beq\label{rd}
 \d X_t=(\Delta X_t-|X_t|^{p-2}X_t+\eta X_t)\d t+\d N_t,
  \end{equation}
where $1 \le p \le 2$ and $\eta$ are some constants,  $N_t$ is a $V$-valued process with stationary increments and a.s.\ c\`{a}dl\`{a}g paths.
\begin{enumerate}
 \item[(1)] If $\eta\le 0$ and $(S4)$ holds, then all assertions in Theorem \ref{thm:single_point} hold for $(\ref{rd})$ with $\b = 2$.
 \item[(2)] If  $\eta> 0$, $N_\cdot(\omega) \in L^2([-1,0]; W^{3,2}(\Lambda))$ for $\P$-$a.e.\ \omega$ and
 satisfies $(S4)$, then the stochastic flow associated with $(\ref{rd})$ has a compact random attractor.
\end{enumerate}
 \end{exa}

\beg{proof}
(1) By Lemma \ref{lem:gen_metric_dns} we know that $(S1)$-$(S3)$ hold. It is also well known that
$(H1)$-$(H4)$ hold for (\ref{rd}) (cf. \cite{L09,L10,PR07}).
If $\eta\le 0$, then it is easy to show that $(H2^\prime)$ holds with $\beta=2$. Therefore, all assertions in Theorem \ref{thm:single_point}  hold for $(\ref{rd})$.

(2) According to Theorem \ref{T6.1} one only needs to verify $(H5)$. Let $S=W^{1,2}_0(\Lambda)$ and
 $\Delta$
be the Laplace operator on $L^2(\Lambda)$ with Dirichlet boundary conditions.
We define
  $$T_n=-\Delta(1-\frac{\Delta}{n})^{-1}.$$
Let $\{P_t\}_{t\geq 0}$ and $\mathcal{E}$ denote the semigroup and Dirichlet form corresponding to $\Delta$.
It is easy to show that $T_n$ are  continuous operators on
$W^{1,2}_0(\Lambda)$ by noting that
$$T_n=n\left(I-\left(I-\frac{\Delta}{n}\right)^{-1}    \right).   $$
Then we have
 \ce { }_{V^*}\langle \Delta u, T_n u\rangle_{V}
 &=& { }_{V^*}\langle \Delta u, -\Delta(1-\frac{\Delta}{n})^{-1}u
 \rangle_{V}\\
 &=&{ }_{V^*}\langle \Delta u, nu-n(1-\frac{\Delta}{n})^{-1}u
 \rangle_{V}\\
 &=&-n\int_0^\infty e^{-t}\langle \nabla u, \nabla u-\nabla P_{\frac{t}{n}} u
 \rangle_{L^2(\Lambda)} \d t\\
 &\leq&-n\int_0^\infty e^{-t}(\mathcal{E}(u, u)-\mathcal{E}(u, P_{\frac{t}{n}}u)) \d t\\
 &\leq& 0,
\de
where the last step follows from the contraction property of the  Dirichlet form $\mathcal{E}$.

By using a similar argument one can show that
$$ {  }_{V^*}\langle -|u|^{p-2}u+\eta u, T_n u
 \rangle_{V}\le \eta\|u\|_n^2,\  u\in W^{1,2}_0(\Lambda).$$
Hence $(\ref{condition 1})$ holds. Using the fact that $P_t$ is bounded on $W^{1,2}_0(\Lambda)$ and
$N_\cdot(\omega) \in L^2([-1,0]; W^{3,2}(\Lambda))$ for $\P$-$a.e.\ \omega$ we have
\begin{equation}\label{e2}
\begin{split}
 \int_{-1}^0 \|T_n N_t\|_V^2 dt &=  \int_{-1}^0 \|-\Delta(I-\frac{\Delta}{n})^{-1} N_t\|_V^2 dt \\
 &= \int_{-1}^0 \|(I-\frac{\Delta}{n})^{-1}(\Delta N_t)\|_V^2 dt \\
& \le C \int_{-1}^0 \|\Delta N_t\|_V^2 dt < \infty,
\end{split}\end{equation}
where the third step follows from  the following formula
$$    (I-\frac{\Delta}{n})^{-1}v=\int_0^\infty e^{-t}P_{\frac{t}{n}}v \d t, \  v\in V.      $$
Hence (\ref{noise condition}) holds.  Then the existence
of the random attractor for (\ref{rd}) follows from Theorem
\ref{T6.1}.
\end{proof}

\begin{rem}\label{rmk:RDE}
  In Example \ref{exa:srde} we had to restrict to reaction terms of at most linear growth.
This restriction is due to the fact that the variational approach to SPDE as presented in \cite{KR79,PR07} does
not apply to nonlinearities of arbitrary high order. However, we only used the results from \cite{KR79,PR07} to
 construct the associated RDS. Therefore, as soon as we can obtain the corresponding RDS by some other method,
 our arguments can be used without change to prove the existence of the random attractor.
More precisely, let $\Lambda$ be an open bounded domain in $\mathbb{R}^d$. We consider the following triple
    $$ V:=W^{1,2}_0(\Lambda) \cap L^p(\Lambda) \subseteq H:= L^2(\Lambda)\subseteq (W^{1,2}_0(\Lambda)\cap L^p(\Lambda))^*$$
  and the stochastic reaction-diffusion equation
  \beq\label{rd2}
    \d X_t=(\Delta X_t-|X_t|^{p-2}X_t+\eta X_t)\d t+\d N_t,
  \end{equation}
  where $2 < p$, $\eta \in \R$ are some constants and $N_t$ is a $V$-valued process with
 stationary increments and a.s.\ c\`{a}dl\`{a}g paths. Note that \eqref{rd2} does not satisfy $(H3)$-$(H4)$
with the same parameter $\a$. Nevertheless, the associated RDS can be defined by an analogous
transformation into a random PDE. The existence and uniqueness of solutions
 for the transformed equation \eqref{e1} follows by a standard proof via Galerkin approximations
(cf. \cite[pp. 91]{T97}). The proof of condition $(H5)$ carries over without change.
If $\eta \le 0$ then Theorem \ref{thm:single_point} can be applied with $\b = p$.
If $\eta > 0$, $N_\cdot (\omega) \in L^2([-1,0]; W_0^{3,2}(\L)) \cap L^p([-1,0]; W_0^{2,p}(\L))$
for $\P$-$a.e.\ \omega$ and satisfies $(S4)$,
 then the same arguments as for Theorem \ref{T6.1} yield the existence of the random attractor.

In \cite[Section 5]{CF94} the existence of a random attractor for stochastic reaction-diffusion equations 
perturbed by finite-dimensional Brownian noise is obtained under the assumption that the noise takes values in
 $H^2(\L) \cap H^1_0(\L) \cap W^{2,\frac{p}{p-1}}(\L)$. In comparison, we can allow infinite-dimensional noise and include
 fractional Brownian motion as well as L\'evy type noise, but we need to require slightly more regular noise taking values in $H^{3}(\L) \cap H_0^1(\L) \cap W^{2,p}(\L)$.
\end{rem}

\begin{rem} Simple examples of noises satisfying the assumptions are given by finite-dimensional noise. Let $N \in \mathbb{N}$ and
\begin{equation}\label{explicit noise}
 N_t=\Sum_{n=1}^N \varphi_n \beta^H_n(t)
\left(\text{or}~~ N_t=\Sum_{n=1}^N \varphi_n L_n(t)~ \right), \  t\in \mathbb{R},
\end{equation}
where $\varphi_n \in W^{3,2}_0(\Lambda) \cap W^{2,p}_0(\L)$ and $\beta^H_n$ are independent two-sided fractional Brownian motions with Hurst parameter $H\in(0,1)$ (or $L_n$ are independent two-sided L\'{e}vy processes). It is easy to show that the noise (\ref{explicit noise}) satisfies all assumptions required in the above example.
Noise of this form can also be used for those examples below as well by choosing appropriate spaces for $\varphi_n$.
\end{rem}

\begin{exa} (stochastic porous media  equation)\\
Let $\Lambda$ be an open bounded domain in $\mathbb{R}^d$. For  $r>1$ we  consider the following triple
$$V:=L^{r+1}(\Lambda)\subseteq H:=W^{-1,2}_0(\Lambda) \subseteq V^*$$
and the stochastic porous media  equation
 \begin{equation}\label{PME}
 \d X_t=\left(\Delta(|X_t|^{r-1}X_t)+\eta X_t\right)\d t+ \d N_t,
  \end{equation}
where $\eta$ is a constant,  $N_t$ is  a $V$-valued process with stationary increments and a.s.\  c\`{a}dl\`{a}g paths.
\begin{enumerate}
 \item[(1)] If $N_\cdot (\omega) \in L^{r+1}\left([-1,0]; W^{2,r+1}(\Lambda)\right)$ for $\P$-$a.e.\ \omega$ and
 satisfies $(S4)$, then the stochastic flow associated with $(\ref{PME})$ has a compact random attractor.
 \item[(2)] If $\eta\le 0$, then all assertions in Theorem \ref{thm:single_point}  hold for $(\ref{PME})$.
\end{enumerate}
 \end{exa}

\begin{proof} (1)
 According to \cite[Example 4.1.11; Remark 4.1.15]{PR07} we know that $(H1)$-$(H4)$ hold for (\ref{PME}). By Lemma \ref{lem:gen_metric_dns} and the assumptions we know $(S1)$-$(S4)$ also hold. Hence we only need to verify  $(H5)$ in Theorem \ref{T6.1}.

 Let $S=L^2(\Lambda)$ and $\Delta$ be the Laplace operator on $L^2(\Lambda)$ with Dirichlet boundary conditions.
We define
$$T_n=-\Delta(I-\frac{\Delta}{n})^{-1}=n\left(I-(I-\frac{\Delta}{n})^{-1}\right).$$
It is well known that  the heat semigroup $\{P_t\}$ (generated by $\Delta$) is contractive on  $L^p(\Lambda)$ for any $p>1$.
Then by the same argument as in (\ref{e2})
 we know that (\ref{noise condition}) holds.

In order to show that $T_n$ are
  continuous operators on $L^{r+1}(\Lambda)$ we use the formula
$$ (I-\frac{\Delta}{n})^{-1}u=\int_0^\infty e^{-t}P_{\frac{t}{n}}u \d t.  $$
By  H\"{o}lder's inequality
 and the contractivity of  $\{P_t\}$ on $L^{r+1}(\Lambda)$ we have
 \ce
 && _{V^*}\langle \Delta(| u|^{r-1} u)+\eta u, -\Delta(1-\frac{\Delta}{n})^{-1}u
 \rangle_{V}\\
 &=& \langle | u|^{r-1} u, nu-n(1-\frac{\Delta}{n})^{-1}u
 \rangle_{L^2} +\eta\|u\|_n^2 \\
 &=& -n\int_0^\infty e^{-t}\left(\int_\Lambda | u|^{r+1}\d x-\int_\Lambda | u|^{r-1}u\cdot P_{\frac{t}{n}}
 u\d x\right) \d t +\eta\|u\|_n^2 \\
 &\leq& \eta\|u\|_n^2,\ \forall u\in L^{r+1}(\Lambda).
\de
Hence  (\ref{condition 1}) holds and the assertion follows from  Theorem \ref{T6.1}.

(2)
If $\eta\le 0$, then by Lemma \ref{lem5} it is easy to show that $(H2^\prime)$ holds with $\beta=r+1$ (cf. \cite{L09,L10}).
Hence all assertions in Theorem \ref{thm:single_point}  hold for $(\ref{PME})$.
 \end{proof}

\begin{rem}\label{rmk:PME}
  In \cite{BGLR10} the existence of a random attractor for generalized porous media equations perturbed by
 finite-dimensional Brownian noise has been proven under the assumption that the noise
 takes values in $W_0^{1,r+1}$. In the case of the standard porous medium equation our results thus extend 
\cite{BGLR10} to infinite-dimensional noise and fractional Brownian motion as well as L\'evy type noise, if the noise is more regular, i.e.\ takes values in $W^{2,r+1}$.
\end{rem}

\beg{exa} ( Stochastic  $p$-Laplace  equation)\\
 Let $\Lambda$ be an open bounded domain in $\mathbb{R}^d$ with convex and smooth boundary. We consider
the following triple
$$V:=W^{1,p}(\Lambda)\subseteq H:= L^2(\Lambda)\subseteq (W^{1,p}(\Lambda))^*$$
and the stochastic $p$-Laplace equation
 \beq\label{PLE}
 \d X_t=\left[ \mathbf{div}(|\nabla X_t|^{p-2}\nabla X_t)-\eta_1|X_t|^{\tilde{p}-2}X_t+\eta_2 X_t\right]\d t+\d N_t,
  \end{equation}
where $2< p<\infty, 1\leq\tilde{p}\leq p$, $\eta_1\ge 0$, $\eta_2 \in \R$ are some constants and  $N_t$ is  a $V$-valued process
with stationary increments and a.s.\  c\`{a}dl\`{a}g paths.
\begin{enumerate}
 \item[(1)] If $N_\cdot (\omega)\in L^p ([-1,0]; W^{3,p}(\Lambda))$ for $\P$-$a.e.\ \omega$ and satisfies $(S4)$, then
the stochastic flow associated with $(\ref{PLE})$ has a compact random attractor.
 \item[(2)] If $\eta_2 \le 0$, then all assertions in Theorem \ref{thm:single_point}  hold for $(\ref{PLE})$.
\end{enumerate}
\end{exa}

 \begin{proof}
(1)
 According to  \cite[Example 4.1.9]{PR07} and the assumptions, we only need to verify $(H5)$ in Theorem \ref{T6.1}.

 Let $S=W^{1,2}(\Lambda)=\mathcal{D}(\sqrt{-\Delta})$, where $\Delta$ is the
Laplace operator on $L^2(\Lambda)$ with Neumann boundary conditions. It is well known
that the corresponding semigroup $\{P_t\}$ is the
Neumann heat semigroup (i.e. the corresponding Markov process is Brownian motion with reflecting boundary conditions).
 Moreover, we know that $P_t$ maps $L^p(\Lambda)$ into $W^{1,p}(\Lambda)$ continuously (see
\cite[Section 2]{CR04} for more general results). Then for all $t\geq0$,
$P_t: W^{1,p}(\Lambda)\rightarrow W^{1,p}(\Lambda)$
is continuous.

Now we define
$$T_n=-\Delta(I-\frac{\Delta}{n})^{-1}=n\left(I-(I-\frac{\Delta}{n})^{-1}\right).$$
It is easy to show that $T_n$ are
also  continuous operators on $W^{1,p}(\Lambda)$ since
$$ (I-\frac{\Delta}{n})^{-1}u=\int_0^\infty e^{-t}P_{\frac{t}{n}}u \d t.  $$
Moreover, since the boundary of the domain is convex and smooth, we have the following  gradient estimate (cf. \cite[Theorem 2.5.1]{W05})
\begin{equation}\label{gradient estimate}
 |\nabla P_t u|\leq P_t |\nabla u|,\ \  u\in W^{1,p}(\Lambda).
\end{equation}
 Since  $\{P_t\}$ is a contractive semigroup on $L^p(\Lambda)$,
  it is easy to see that $\{P_t\}$ is also a contractive semigroup on $W^{1,p}(\Lambda)$ by (\ref{gradient estimate}).
Therefore,
 \ce
 && _{V^*}\langle \mathbf{div}(|\nabla u|^{p-2}\nabla u), T_n u
 \rangle_{V}\\
 &=& _{V^*}\langle \mathbf{div}(|\nabla u|^{p-2}\nabla u), nu-n(1-\frac{\Delta}{n})^{-1}u
 \rangle_V \\
 &=&n\int_0^\infty e^{-t}{ }_{V^*}\langle \mathbf{div}(|\nabla u|^{p-2}\nabla u), u-P_{\frac{t}{n}} u
 \rangle_V \d t\\
 &=& -n\int_0^\infty e^{-t}\left(\int_\Lambda|\nabla u|^{p}\d x-\int_\Lambda |\nabla u|^{p-2}\nabla u\cdot\nabla P_{\frac{t}{n}}
 u\d x\right)
  \d t\\
 &\leq&0, \ u\in W^{1,p}(\Lambda),
\de
where in the last step we used H\"{o}lder's inequality
 and the contractivity of  $\{P_t\}$ on $W^{1,p}(\Lambda)$ to conclude \ce
&&\int_\Lambda|\nabla u|^{p-2}\nabla u\cdot\nabla P_{s}
 u \d x\\
 &\leq&\left(\int_\Lambda|\nabla
u|^{p}\d x\right)^{\frac{p-1}{p}}\cdot\left(\int_\Lambda|\nabla
P_su|^p\d x\right)^{\frac{1}{p}}\\
&\leq&\left(\int_\Lambda|\nabla
u|^{p}\right)^{\frac{p-1}{p}}\cdot\left(\int_\Lambda|P_s|\nabla
u\|^p\d x\right)^{\frac{1}{p}}\\
&\leq& \int_\Lambda|\nabla u|^{p}\d x. \de

Using the same argument we obtain
$$ _{V^*}\langle -\eta_1 |u|^{\tilde{p}-2} u -\eta_2 u, T_n u
 \rangle_{V}\le \eta_2 \|u\|_n^2, \ u\in W^{1,p}(\Lambda).
$$
Hence (\ref{condition 1}) holds.

Note that  (\ref{noise condition}) also holds due to the same argument as in (\ref{e2}).
Therefore, the assertion follows from  Theorem \ref{T6.1}.

(2) If $\eta_2\le 0$, then by Lemma \ref{lem5} it is easy to show that $(H2^\prime)$ holds with $\beta=p$ (cf. \cite{L09,L10}).
Hence all assertions in Theorem \ref{thm:single_point}  hold for $(\ref{PLE})$.
 \end{proof}

\paragraph{Acknowledgement}
The authors would like to thank the referee for many helpful suggestions and comments.


\end{document}